\newtheorem{proposition}{Proposition}[section]
\newtheorem{lemma}{Lemma}[section]
\newtheorem{theorem}{Theorem}[section]
\newtheorem{remark}{Remark}[section]
\newtheorem{definition}{Definition}
\title{On the Exact Solution to a \linebreak Smart Grid Cyber-Security Analysis Problem}
\author{Kin Cheong Sou, Henrik Sandberg and Karl Henrik Johansson\thanks{The authors are with the ACCESS Linnaeus Center and the Automatic Control Lab, the School of Electrical Engineering, KTH Royal Institute of Technology, Sweden. {\tt \small \{sou,hsan,kallej\}@kth.se} \newline This work is supported by the European Commission through the VIKING project, the Swedish Research Council (VR) under Grant 2007-6350 and Grant 2009-4565 and the Knut and Alice Wallenberg Foundation.}}
\begin{document}
\maketitle

\begin{abstract}
  This paper considers a smart grid cyber-security problem analyzing the vulnerabilities of electric power networks to false data attacks. The analysis problem is related to a constrained cardinality minimization problem. The main result shows that an $l_1$ relaxation technique provides an exact optimal solution to this cardinality minimization problem. The proposed result is based on a polyhedral combinatorics argument. It is different from well-known results based on mutual coherence and restricted isometry property. The results are illustrated on benchmarks including the IEEE 118-bus and 300-bus systems.
\end{abstract}

\begin{IEEEkeywords}
  Power network state estimation, security, operation research, optimization methods.
\end{IEEEkeywords}


\section{Introduction}
A modern society relies critically on the proper operation
of the electric power distribution and transmission system, which is supervised and controlled through Supervisory Control And Data Acquisition (SCADA) systems. Through remote terminal units (RTUs), SCADA systems measure data such as transmission line power flows, bus power injections and part of the bus voltages, and send them to the state estimator to estimate the power network states (e.g., the bus voltage phase angles and bus voltage magnitudes). The estimated states are used for vital power network operations such as optimal power flow (OPF) dispatch and contingency analysis (CA) \cite{Abur_Exposito_SEbook,Monticelli_SEbook}. See Fig.~\ref{fig:SCADA-attack} for a block diagram of the above functionalities.
\begin{figure}
\centering
\centering{\includegraphics[scale=0.4]{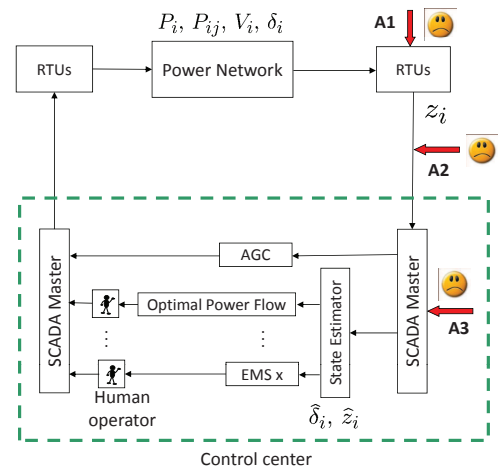}}
\caption{Block diagram of power network control center and SCADA. RTUs connected to the substations transmit and receive data from the control center using the SCADA system. At the control center, a state estimate is computed and then used by Energy Management Systems (EMS) to send out commands to the power network.
The human figures indicate where a human is needed in the control loop. This paper considers the false data attack scenario in A2.}\label{fig:SCADA-attack}
\end{figure}
Any malfunctioning of these operations can delay proper reactions in the control center, and lead to significant social and economical consequences such as the northeast US blackout of 2003.

The technology and the use of the SCADA systems have evolved a lot since the 1970's when they were introduced. The SCADA systems now are interconnected to office LANs, and through them they are connected to the Internet. Hence, today there are more access points to the SCADA systems, and also more functionalities to tamper with. For example, the RTUs can be subjected to denial-of-service attacks (A1 in Fig.~\ref{fig:SCADA-attack}). The communicated data can be subjected to false data attacks (A2). Furthermore, the SCADA master itself can be attacked (A3). This paper focuses on the cyber security issue related to false data attacks (A2), where the communicated metered measurements are subjected to additive data attacks. A false data attack can potentially lead to erroneous state estimates by the state estimator, which can result in gross errors in OPF dispatch and CA. In turn, these can lead to disasters of significant social and economical consequences. False data attack on communicated metered measurements has been considered in the literature (e.g.,\ \cite{LRN09,STJ_SCS2010,DS_SGC2010,BRWKNO10,KJTT11,SSJ_CDC2011_mincut,GBMKP11,KP11}). \cite{LRN09} was the first to point out that a coordinated intentional data attack can be staged without being detected by state estimation bad data detection (BDD) algorithm, which is a standard part of today's SCADA/EMS system. \cite{LRN09,STJ_SCS2010,DS_SGC2010,KJTT11,SSJ_CDC2011_mincut,GBMKP11,KP11} investigate the construction problem for such ``unobservable'' data attack, especially the sparse ones involving relatively few meters to compromise, under various assumptions of the network (e.g., DC power flow model \cite{Abur_Exposito_SEbook,Monticelli_SEbook}). In particular, \cite{LRN09} poses the attack construction problem as a cardinality minimization problem to find the sparsest attack including a given set of target measurements. \cite{STJ_SCS2010,DS_SGC2010,SSJ_CDC2011_mincut} set up similar optimization problems for the sparsest attack including a given measurement. \cite{KJTT11,KP11} seek the sparsest nonzero attack and \cite{GBMKP11} finds the sparsest attack including exactly two injection measurements. The solution information of the above optimization problems can help network operators identify the vulnerabilities in the network and strategically assign protection resources (e.g., encryption of meter measurements) to their best effect (e.g., \cite{DS_SGC2010,BRWKNO10,KP11}). On the other hand, the unobservable data attack problem has its connection to another vital EMS functionality, namely observability analysis \cite{Abur_Exposito_SEbook,Monticelli_SEbook}. In particular, solving the attack construction problem can also solve an observability analysis problem (this is to be explained in Section~\ref{subsec:msra}). This connection was first reported in \cite{KJTT11}, and was utilized in \cite{SSJ_ckt} to compute the sparsest critical $p$-tuples for some integer $p$. This is a generalization of critical measurements and critical sets \cite{Abur_Exposito_SEbook}.

To perform the cyber-security analysis in a timely manner, it is important to solve the data attack construction problem efficiently. This effort has been discussed, for instance, in \cite{LRN09,STJ_SCS2010,DS_SGC2010,KJTT11,SSJ_CDC2011_mincut,GBMKP11,KP11}. The efficient solution to the attack construction problem in \cite{STJ_SCS2010} is the focus of this paper. The matching pursuit method \cite{Mallat93matchingpursuit} employed in \cite{LRN09} and the basis pursuit method \cite{CDS_Basis_Pursuit} ($l_1$ relaxation and its weighted variant) employed in \cite{KP11} are common efficient (i.e., polynomial-time) approaches to suboptimally solve the attack construction problem. However, these methods do not guarantee exact optimal solutions, and in some cases they might not be sufficient (see for instance \cite{SSJ_CDC2011_mincut} for a naive application of basis pursuit and its consequences). While \cite{KJTT11,KP11} provide polynomial-time solution procedures for their respective attack construction problems, the problems therein are different from the one in this paper. Furthermore, the considered problem in this paper cannot be solved as a special case of \cite{KJTT11,KP11}. In particular, in \cite{KJTT11} the attack vector contains at least one nonzero entry. However, this nonzero entry cannot be given a priori. \cite{KP11} needs to restrict the number of nonzero injection measurements attacked, while there is no such requirement in the problem considered in this paper. In \cite{STJ_SCS2010,DS_SGC2010} a simple heuristics is provided to find suboptimal solutions to the attack construction problem. This heuristics, however, might not be sufficiently accurate. \cite{SSJ_CDC2011_mincut,SSJ_ckt,HJJSS12} is most closely related to the current work. The distinctions will be elaborated in Section~\ref{subsec:relationship_mincut}.

The main conclusion of this paper is that basis pursuit (i.e., $l_1$ relaxation) can indeed solve the data attack construction problem \emph{exactly}, under the assumption on the network metering system that no injection measurements are metered. The limitations of this assumption will be discussed in Section~\ref{subsec:H_assumptions}. In fact, the main result identifies a class of cardinality minimization problems where basis pursuit can provide exact optimal solutions. This class of problems include as a special case the considered data attack construction problem, under the assumption above. 

{\bf Outline:} Section~\ref{sec:model_opt_problems} describes the state estimation model and introduces the optimization problems considered in this paper. Section~\ref{sec:main_contribution} describes the main results of this paper -- the solution to the considered optimization problems. Section~\ref{sec:related_work} compares the proposed result to related works. Section~\ref{sec:proof} provides the proof the proposed main results. Section~\ref{sec:numerical_demonstration} numerically demonstrates the advantages of the proposed results.

\section{State Estimation and Cyber-Security Analysis Optimization Problems} \label{sec:model_opt_problems}

\subsection{Power network model and state estimation}
A power network with $n+1$ buses and $m_a$ transmission lines can be described as a graph with $n+1$ nodes and $m_a$ edges. The graph topology can be specified by the (directed) incidence matrix $B_0 \in \mathbb{R}^{(n+1) \times m_a}$, in which the direction of the edges can be assigned arbitrarily. The physical property of the network is described by a nonsingular diagonal matrix $D \in \mathbb{R}^{m_a \times m_a}$, whose nonzero entries are the reciprocals of the reactance of the transmission lines.

The states of the network include bus voltage phase angles and bus voltage magnitudes, the latter of which are typically assumed to be constant (equal to one in the per unit system). In addition, since one arbitrary bus is assigned as the reference with zero voltage phase angle, the network states considered can be captured in a vector $\theta \in {[0,2\pi)}^n$. The state estimator estimates the states $\theta$ based on the measurements obtained from the network. Under the DC power flow model \cite{Abur_Exposito_SEbook,Monticelli_SEbook} the measurement vector, denoted as $z$, is related to $\theta$ by
\begin{equation} \label{def:H_matrix}
  z = H \theta + \Delta z, \quad {\rm where} \quad H \triangleq
    \begin{bmatrix} P D B^T \\ Q B D B^T \end{bmatrix}.
\end{equation}
In (\ref{def:H_matrix}), $\Delta z$ can be either a vector of random error or intentional additive data attack (e.g., \cite{LRN09}). $B \in \mathbb{R}^{n \times m_a}$ is the truncated incidence matrix (i.e., $B_0$ with the row corresponding to the reference node removed), and $P$ consists of a subset of rows of an identity matrices of appropriate dimension, indicating which line power flow measurements are actually taken. Together, $P D B^T \theta$ is a vector of the power flows on the transmission lines to be measured. Analogously, the matrix $Q$ selects the bus power injection measurements that are taken. $Q B D B^T \theta$ is a vector of power injections at the buses to be measured. Therefore, $H$ is the measurement matrix, relating the measured power quantities to the network states. The number of rows of $H$ is denoted $m$.

The measurements $z$ and the network information $H$ are jointly used to find an estimate of the network states denoted as $\hat{\theta}$. Assuming that the network is observable, it is well-established that the state estimate can be obtained using the weighted least squares approach \cite[Chapter 5]{Abur_Exposito_SEbook} \cite[Chapter 8]{Monticelli_SEbook}:
\begin{equation} \label{def:theta_hat}
  \hat{\theta} = {(H^T W H)}^{-1} W H^T z,
\end{equation}
where $W$ is a positive definite diagonal weighting matrix, typically weighting more on the more accurate measurements. The state estimate $\hat{\theta}$ is subsequently fed to other vital SCADA functionalities such as OPF dispatch and CA. Therefore, the accuracy and reliability of $\hat{\theta}$ is of paramount concern.

To detect possible faults in the measurements $z$, the BDD test is commonly performed (see \cite{Abur_Exposito_SEbook,Monticelli_SEbook}). In one typical strategy, if the norm of the residual
\begin{equation} \label{def:BDD_residual_norm}
  {\rm residual} \triangleq z - H \hat{\theta} = (I - H{(H^T W H)}^{-1} W H^T) \Delta z\;
\end{equation}
is too big, then the BDD alarm will be triggered.

\subsection{Unobservable data attack and security index}
The BDD test is in general sufficient to detect the presence of $\Delta z$ if it contains a single random error \cite{Abur_Exposito_SEbook,Monticelli_SEbook}. However, in face of a \emph{coordinated} malicious data attack on multiple measurements the BDD test can fail. In particular, \cite{LRN09} considers unobservable attack of the form
\begin{equation} \label{def:unobservable_attack}
\Delta z = H \Delta \theta
\end{equation}
for an arbitrary $\Delta \theta \in \mathbb{R}^n$. Since $\Delta z$ as defined in (\ref{def:unobservable_attack}) would result in a zero residual in (\ref{def:BDD_residual_norm}), it is unobservable from the BDD perspective. This was also experimentally verified in \cite{TDSJ11} in a realistic SCADA system testbed. To quantify the vulnerability of a network to unobservable attacks, \cite{STJ_SCS2010} introduced the notion of security index for an arbitrarily specified measurement. The security index is the optimal objective value of the following cardinality minimization problem:
\begin{equation} \label{opt:card_min_con}
  \begin{array}{cl}
    \mathop{\rm minimize}\limits_{\Delta \theta \in \mathbb{R}^n} & {\left\| H \Delta \theta \right\|}_0 \vspace{1mm} \\
    \textrm{subject to} & H(k,:) \Delta \theta = 1,
  \end{array}
\end{equation}
where $k$ is given, indicating that the security index is computed for measurement $k$. The symbol ${\|\cdot\|}_0$ denotes the cardinality of a vector and $H(k,:)$ denotes the $k^{\rm th}$ row of $H$. The security index is the minimum number of measurements an attacker needs to compromise in order to attack measurement $k$ undetected. In particular, a small security index for a particular measurement $k$ means that in order to compromise $k$ undetected it is necessary to compromise only a small number of additional measurements. This can imply that measurement $k$ is relatively easy to compromise in an unobservable attack. As a result, the knowledge of the security indices allows the network operator to pinpoint the security vulnerabilities of the network, and to better protect the network with limited resource. To model the case where certain measurements are protected (hence cannot be attacked) \cite{LRN09,DS_SGC2010,BRWKNO10}, problem (\ref{opt:card_min_con}) becomes:
\begin{equation} \label{opt:card_min_con_protected}
  \begin{array}{cl}
    \mathop{\rm minimize}\limits_{\Delta \theta \in \mathbb{R}^n} & {\left\| H \Delta \theta \right\|}_0 \vspace{1mm} \\
    \textrm{subject to} & H(k,:) \Delta \theta = 1 \vspace{1mm} \\
    & H(\mathcal{I},:) \Delta \theta = 0,
  \end{array}
\end{equation}
where the protection index set $\mathcal{I} \subset \{1,2,\ldots,m\}$ is given, $H(\mathcal{I},:)$ denotes a submatrix of $H$ with rows indexed by $\mathcal{I}$. By convention, the constraint $H(\mathcal{I},:) \Delta \theta = 0$ is ignored when $\mathcal{I} = \emptyset$. Hence, (\ref{opt:card_min_con}) is a special case of (\ref{opt:card_min_con_protected}).

\subsection{Measurement set robustness analysis} \label{subsec:msra}
Problem (\ref{opt:card_min_con}) is also motivated from another important state estimation analysis problem, namely observability analysis \cite{Abur_Exposito_SEbook,Monticelli_SEbook}. The measurement set, described by $H$ in (\ref{def:H_matrix}), is observable if $\hat{\theta}$ can be uniquely determined by (\ref{def:theta_hat}). An important question of observability analysis is as follows:
\begin{equation} \label{opt:card_min_obsv}
  \begin{array}{cl}
    \mathop{\rm minimize}\limits_{\mathcal{J}} & {\left\|\mathcal{J}\right\|}_0 \vspace{0.5mm} \\
    \textrm{subject to} & {\rm rank}(H(\bar{\mathcal{J}},:)) < n \vspace{1.5mm} \\
    & {\rm rank}(H(\bar{\mathcal{J}}\cup\{k\},:)) = n \vspace{1.5mm} \\
    & k \in \mathcal{J}.
  \end{array}
\end{equation}
In above, $k$ is a given index and $\bar{\mathcal{J}}$ denotes the complement of $\mathcal{J}$ (for index set $\mathcal{I}$ in the rest of the paper, $\bar{\mathcal{I}}$ denotes its complement). The meaning of (\ref{opt:card_min_obsv}) is as follows: $\mathcal{J}$ denotes a subset of measurements from the measurement system described by $H$. The condition that ${\rm rank}(H(\bar{\mathcal{J}},:)) < n$ means that the measurement system becomes unobservable if the measurements associated with $\mathcal{J}$ are lost. That is, it becomes impossible to uniquely determine $\hat{\theta}$ from $H(\bar{\mathcal{J}},:)$. The problem in (\ref{opt:card_min_obsv}) seeks the minimum cardinality $\mathcal{J}$ which must include a particular given measurement $k$. Therefore, if there exist a measurement $k$ which leads to an instance of (\ref{opt:card_min_obsv}) with a very small objective value, then the measurement system is not robust against meter failure. Special cases of (\ref{opt:card_min_obsv}) have been extensively studied in the power system community. For instance, the solution label sets of cardinalities one and two are, respectively, referred to as critical measurements and critical sets containing measurement $k$ \cite{Abur_Exposito_SEbook}. Their calculations have been documented in, for example, \cite{Abur_Exposito_SEbook,KC91,AAG09,AH86,CKD81}. For the more general cases where the minimum cardinality is $p > 2$, the solution label set in (\ref{opt:card_min_obsv}) is a critical $p$-tuple which contains the specified measurement $k$ \cite{LAB00,SSJ_ckt}. Solving (\ref{opt:card_min_con}) solves (\ref{opt:card_min_obsv}) as well. The justification is given by the following statement inspired by \cite{KJTT11}, and proved in Appendix:
\begin{proposition} \label{thm:con_obsv_eq}
  Let $H \in \mathbb{R}^{m \times n}$ and $k \in \{1,2,\ldots,m\}$ be given for problems (\ref{opt:card_min_con}) and (\ref{opt:card_min_obsv}). Denote the two conditions:
  \begin{description}
    \item[I:] $H(k,:) \neq 0$.
    \item[II:] $H$ has full column rank ($=n$).
  \end{description}
  The following three statements are true:
  \begin{description}
    \item[(a)] Problem (\ref{opt:card_min_con}) is feasible if and only if condition I is satisfied.
    \item[(b)] Problem (\ref{opt:card_min_obsv}) is feasible if and only if conditions I and II are satisfied.
    \item[(c)] If conditions I and II are satisfied, then (\ref{opt:card_min_con}) and (\ref{opt:card_min_obsv}) are equivalent (see Definition \ref{def:equivalence} in Section~\ref{subsec:definition}).
  \end{description}
\end{proposition}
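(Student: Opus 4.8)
The plan is to handle the three claims in order of increasing difficulty, using two elementary facts: the objective of (\ref{opt:card_min_con}) equals the size of the support of $H\Delta\theta$, and the constraints of (\ref{opt:card_min_obsv}) are rank statements about row subsets of $H$. Claim (a) is immediate: feasibility of (\ref{opt:card_min_con}) is solvability of the single scalar equation $H(k,:)\Delta\theta = 1$, which holds precisely when $H(k,:)\ne 0$, in which case $\Delta\theta = H(k,:)^T/\|H(k,:)\|_2^2$ works.

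For claim (b) I would argue necessity and sufficiency separately. If $\mathcal{J}$ is feasible for (\ref{opt:card_min_obsv}), then $H(\bar{\mathcal{J}}\cup\{k\},:)$ is a submatrix of $H$ (a subset of its rows) with rank $n$, forcing $\mathrm{rank}(H)=n$, i.e.\ condition II; moreover, since $k\notin\bar{\mathcal{J}}$, adjoining the row $H(k,:)$ strictly raises the rank, so $H(k,:)$ is not in the row space of $H(\bar{\mathcal{J}},:)$, hence $H(k,:)\ne 0$, i.e.\ condition I. Conversely, assume I and II. Then the rows of $H$ span $\mathbb{R}^n$ and the singleton $\{H(k,:)\}$ is linearly independent, so it extends to a set $S$ of $n$ rows of $H$, containing $k$, that is a basis of the row space. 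Taking $\mathcal{J}:=\overline{S\setminus\{k\}}$ gives $k\in\mathcal{J}$, $\mathrm{rank}(H(\bar{\mathcal{J}},:))=\mathrm{rank}(H(S\setminus\{k\},:))=n-1<n$, and $\mathrm{rank}(H(\bar{\mathcal{J}}\cup\{k\},:))=\mathrm{rank}(H(S,:))=n$, so (\ref{opt:card_min_obsv}) is feasible.

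For claim (c) I would prove equality of the two optimal values by exhibiting explicit maps in both directions, assuming I and II throughout. Given an optimal $\Delta\theta$ for (\ref{opt:card_min_con}), set $\mathcal{J}:=\mathrm{supp}(H\Delta\theta)$, so $k\in\mathcal{J}$ and $\|\mathcal{J}\|_0=\|H\Delta\theta\|_0$; since $H(\bar{\mathcal{J}},:)\Delta\theta=0$ with $\Delta\theta\ne 0$, we get $\mathrm{rank}(H(\bar{\mathcal{J}},:))<n$. For the remaining constraint I invoke optimality: if $\mathrm{rank}(H(\bar{\mathcal{J}}\cup\{k\},:))<n$ there is $v\ne 0$ with $H(k,:)v=0$ and $H(\bar{\mathcal{J}},:)v=0$, and the perturbed point $\Delta\theta+tv$ still satisfies $H(k,:)(\Delta\theta+tv)=1$ and has zero $\bar{\mathcal{J}}$-entries; so either some coordinate $i\in\mathcal{J}\setminus\{k\}$ with $H(i,:)v\ne 0$ can be annihilated by $t=-H(i,:)\Delta\theta/H(i,:)v$, contradicting optimality, or $H(i,:)v=0$ for all such $i$, which combined with $H(k,:)v=0$ and $H(\bar{\mathcal{J}},:)v=0$ gives $Hv=0$ and hence $v=0$ by condition II --- a contradiction. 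Thus $\mathcal{J}$ is feasible for (\ref{opt:card_min_obsv}) with the same objective, giving $\mathrm{opt}(\ref{opt:card_min_obsv})\le\mathrm{opt}(\ref{opt:card_min_con})$. In the reverse direction, for an optimal $\mathcal{J}$ of (\ref{opt:card_min_obsv}) the two rank conditions force $\mathrm{rank}(H(\bar{\mathcal{J}},:))=n-1$, so $\ker H(\bar{\mathcal{J}},:)=\mathrm{span}\{v\}$ for some $v\ne 0$; then $H(k,:)v\ne 0$ (otherwise $v\in\ker H(\bar{\mathcal{J}}\cup\{k\},:)$, contradicting rank $n$), and rescaling so that $H(k,:)v=1$ yields a point feasible for (\ref{opt:card_min_con}) with $\mathrm{supp}(Hv)\subseteq\mathcal{J}$, hence objective at most $\|\mathcal{J}\|_0$. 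This gives the reverse inequality, so the optimal values coincide; together with the explicit polynomial-time constructions mapping optimizers to optimizers, this establishes the equivalence of Definition~\ref{def:equivalence}.

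The main obstacle is precisely the second rank constraint in (c): for a merely feasible $\Delta\theta$, $\mathrm{rank}(H(\bar{\mathcal{J}}\cup\{k\},:))$ need not equal $n$, so the support map is well defined as a map of \emph{optimizers} only, and the perturbation-and-contradiction argument --- which is where condition II (full column rank) is genuinely used --- carries the weight of the proof. Everything else reduces to elementary rank bookkeeping.
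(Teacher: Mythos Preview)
Your proof is correct. Parts (a), the reverse direction of (c), and the core perturbation argument all match the paper's proof essentially line for line.

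The one genuine difference is in the sufficiency half of (b). You give a direct linear-algebra construction: extend $\{H(k,:)\}$ to a row basis $S$ and set $\mathcal{J}=\overline{S\setminus\{k\}}$. The paper instead takes an \emph{optimal} $\theta^\star$ for (\ref{opt:card_min_con}), sets $\mathcal{J}_{\theta^\star}=\mathrm{supp}(H\theta^\star)$, and runs the perturbation-and-contradiction argument already in part (b) to verify the rank-$n$ constraint. In other words, the paper places the perturbation argument in (b) and then simply cites it in (c), whereas you place it in (c) and handle (b) by a separate, more elementary route. Your organization makes (b) self-contained and cheaper; the paper's organization avoids writing the perturbation argument twice and has the side benefit that the feasible $\mathcal{J}$ exhibited in (b) is already the one needed for one direction of (c). Both are fine, and the mathematical content is the same.
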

Note that if condition I is not satisfied, then the corresponding measurement $k$ should be removed from consideration. Also, since measurement redundancy is a common practice in power networks \cite{Abur_Exposito_SEbook,Monticelli_SEbook}, $H$ can be assumed to have full column rank ($=n$). Therefore, conditions I and II in Proposition~\ref{thm:con_obsv_eq} can be justified in practice. Finally, note that Proposition~\ref{thm:con_obsv_eq} remains true for arbitrary matrix $H$ (not necessarily defined by (\ref{def:H_matrix})).

\section{Problem Statement and Main Result} \label{sec:main_contribution}
\subsection{Problem statement}
As discussed previously, this paper proposes an efficient solution to the security index (i.e., attack construction) problem in (\ref{opt:card_min_con_protected}). However, the proposed result focuses only on a generalization of a special case of (\ref{opt:card_min_con_protected}). In this special case, $H$ in (\ref{def:H_matrix}) does not contain injection measurements:
\begin{equation} \label{eqn:no_inj_assumption}
  H = P D B^T.
\end{equation}
The limitation of the assumption in (\ref{eqn:no_inj_assumption}) will be discussed in Section~\ref{subsec:H_assumptions}, after the main result is presented.

In the Appendix, it is shown that the special case of (\ref{opt:card_min_con_protected}) with the assumption in (\ref{eqn:no_inj_assumption}) is equivalent to
\begin{equation} \label{opt:card_min_con_var}
  \begin{array}{cl}
    \mathop{\rm minimize}\limits_{\Delta \theta \in \mathbb{R}^n} & \left\| P(\bar{\mathcal{I}},:) B^T \Delta \theta \right\|_0 \vspace{1mm} \\
    \textrm{subject to} & P(k,:) B^T \Delta \theta = 1 \vspace{1mm} \\
    & P(\mathcal{I},:) B^T \Delta \theta = 0.
  \end{array}
\end{equation}
Instead of considering (\ref{opt:card_min_con_var}) directly, the proposed result pertains to a more general optimization problem associated with a \emph{totally unimodular matrix} (i.e., the determinant of every square submatrix is either $-1$, 0, or 1 \cite{ACCO_Schrijver10}). In particular, the following problem is the main focus of this paper:
\begin{equation}\label{opt:l0}
  \begin{array}{cl}
    \mathop{\rm minimize}\limits_{x \in \mathbb{R}^n} & \left\|A(\bar{\mathcal{I}},:) x\right\|_0 \\
    \textrm{subject to} & A(k,:) x = 1 \vspace{1mm} \\
    & A(\mathcal{I},:) x = 0,
  \end{array}
\end{equation}
where $A \in \mathbb{R}^{m \times n}$ is a given totally unimodular matrix, and $k \in \{1,2,\ldots,m\}$ and $\mathcal{I} \subset \{1,2,\ldots,m\}$ are given. Since $B$ in (\ref{opt:card_min_con_var}) is an incidence matrix, $P B^T$ is a totally unimodular matrix. Therefore, (\ref{opt:l0}) is a generalization of (\ref{opt:card_min_con_var}). However, neither (\ref{opt:l0}) nor (\ref{opt:card_min_con_protected}) includes each other as special cases.

\subsection{$l_1$ relaxation}
Problem (\ref{opt:l0}) is a cardinality minimization problem. In general, no efficient algorithms have been found for solving cardinality minimization problems \cite{CT05}, so heuristic or relaxation based algorithms are often considered. The $l_1$ relaxation (i.e., basis pursuit \cite{CDS_Basis_Pursuit}) is a relaxation technique which has received much attention. In $l_1$ relaxation, instead of (\ref{opt:l0}), the following optimization problem is set up and solved:
\begin{equation}\label{opt:l1}
  \begin{array}{cl}
   \mathop{\rm minimize}\limits_{x \in \mathbb{R}^n} & \left\|A(\bar{\mathcal{I}},:) x\right\|_1 \\
    \textrm{subject to} & A(k,:) x = 1 \vspace{1mm} \\
    & A(\mathcal{I},:) x = 0,
  \end{array}
\end{equation}
where in the objective function in (\ref{opt:l1}) the vector 1-norm replaces the cardinality in (\ref{opt:l0}). Problem (\ref{opt:l1}) can be rewritten as a linear programming (LP) problem in standard form \cite[pp.~4-6, p.17]{BT97}:
\begin{equation} \label{opt:l1_LP}
    \begin{array}{cl}
      \mathop{\rm minimize}\limits_{x_+,x_-,y_+,y_-} & \sum\limits_{j = 1}^{|\bar{\mathcal{I}}|} \big(y_+(j) + y_-(j)\big) \\
      \textrm{subject to} & A(\bar{\mathcal{I}},:)(x_+ - x_-) = y_+ - y_- \\
      & A(k,:)(x_+ - x_-) = 1 \\
      & A(\mathcal{I},:)(x_+ - x_-) = 0 \\
      & x_+ \in \mathbb{R}^n_+, \; x_- \in \mathbb{R}^n_+, \; y_+ \in \mathbb{R}^{|\bar{\mathcal{I}}|}_+, \; y_- \in \mathbb{R}^{|\bar{\mathcal{I}}|}_+,
    \end{array}
\end{equation}
where $|\bar{\mathcal{I}}|$ denotes the cardinality of the index set $\bar{\mathcal{I}}$.

If $(x_+,x_-,y_+,y_-)$ is a feasible solution to (\ref{opt:l1_LP}), then $x \triangleq x_+ - x_-$ is feasible to (\ref{opt:l0}). Hence, an optimal solution to (\ref{opt:l1_LP}), if it exists, corresponds to a suboptimal solution to the original problem in (\ref{opt:l0}). An important question is under what conditions this suboptimal solution is actually optimal to (\ref{opt:l0}). An answer is provided by our main result, based on the special structure in (\ref{opt:l0}) and the fact that matrix $A$ is totally unimodular.
\subsection{Statement of main result}
\begin{theorem} \label{thm:l0_l1_solution}
Let $(x_+^\star,x_-^\star,y_+^\star,y_-^\star)$ be an \emph{optimal basic feasible solution} to (\ref{opt:l1_LP}), where $A$, $k$ and $\mathcal{I}$ are defined in (\ref{opt:l0}). Then $x^\star \triangleq (x_+^\star - x_-^\star)$ is an optimal solution to (\ref{opt:l0}).
\end{theorem}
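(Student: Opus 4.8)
The plan is to pin down the cardinality $\|A(\bar{\mathcal{I}},:)x^\star\|_0$ from both sides. Write $w^\star \triangleq A(\bar{\mathcal{I}},:)x^\star$, let $v_{LP}$ be the (finite) optimal value of (\ref{opt:l1_LP}), and let $c^\star$ be the optimal value of (\ref{opt:l0}); here $c^\star$ is a well-defined nonnegative integer, since the objective of (\ref{opt:l0}) is integer-valued and (\ref{opt:l0}) is feasible by Proposition~\ref{thm:con_obsv_eq}(a) (feasibility of (\ref{opt:l1_LP}), which is presupposed, forces $A(k,:)\neq 0$ via its constraint $A(k,:)(x_+-x_-)=1$, and also gives a feasible point of (\ref{opt:l0})). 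I would establish the chain
\[ c^\star\;\le\;\|w^\star\|_0\;\le\;\|w^\star\|_1\;=\;v_{LP}\;\le\;c^\star, \]
after which $\|w^\star\|_0=c^\star$, and since the constraints of (\ref{opt:l1_LP}) make $x^\star$ feasible for (\ref{opt:l0}), $x^\star$ is optimal. The leftmost inequality is exactly that feasibility; what remains is (I) the middle equality $\|w^\star\|_1=v_{LP}$ together with integrality of $w^\star$ (which yields $\|w^\star\|_0\le\|w^\star\|_1$), and (II) the bound $v_{LP}\le c^\star$.

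For (I) I would argue by polyhedral combinatorics. First, the constraint matrix of (\ref{opt:l1_LP}) is totally unimodular: up to a row permutation it is $[\,A\mid -A\mid \text{signed unit columns of }\mathbb{R}^m\,]$ — the $x_+,x_-$ blocks duplicate and negate the columns of $A$, and the $y_+,y_-$ blocks are $\mp$ standard basis vectors of $\mathbb{R}^m$ (their nonzeros occupy only the $\bar{\mathcal{I}}$ rows) — and column duplication, sign flipping, and appending signed unit columns all preserve total unimodularity \cite{ACCO_Schrijver10}. Since the data of (\ref{opt:l1_LP}) is integral, every basic feasible solution has basic part $M_B^{-1}b$ with $\det M_B=\pm1$, hence integral; so $x^\star=x_+^\star-x_-^\star$ and $w^\star=y_+^\star-y_-^\star$ are integer vectors, whence $\|w^\star\|_0\le\|w^\star\|_1$. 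Next, at any optimal solution of (\ref{opt:l1_LP}) one has $\min\{y_+^\star(j),y_-^\star(j)\}=0$ for every $j$ (otherwise subtracting that minimum from both lowers the objective while preserving $y_+-y_-$, hence feasibility), so the optimal value equals $\sum_j|y_+^\star(j)-y_-^\star(j)|=\|w^\star\|_1$; thus $\|w^\star\|_1=v_{LP}$.

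Step (II) uses total unimodularity of $A$ a second time, to round an $l_0$-optimal point into one whose $A$-image is $\{-1,0,1\}$-valued. Pick $x^{\mathrm{opt}}$ attaining $c^\star$ and set $S\triangleq\operatorname{supp}\big(A(\bar{\mathcal{I}},:)x^{\mathrm{opt}}\big)\subseteq\bar{\mathcal{I}}$, so $|S|=c^\star$ and $k\in S$. Choose a matrix $C$ whose rows form a basis of the left null space of $A$; $C$ can be taken totally unimodular — via fundamental cycles when $A$ is a network (incidence-type) matrix as in (\ref{opt:card_min_con_var}), and in general by a sequence of pivots on a nonsingular maximal square submatrix of $A$, pivoting being a total-unimodularity-preserving operation \cite{ACCO_Schrijver10}. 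Writing $C_S$ for the columns of $C$ indexed by $S$, the set $\widehat{W}\triangleq\{u\in\mathbb{R}^{S}:C_S u=0,\ |u_i|\le 1\ \forall i\in S\}$ is a polytope cut out by a totally unimodular system with integral data, hence integral; it is bounded and nonempty ($0\in\widehat{W}$). Since $Ax^{\mathrm{opt}}\in\operatorname{range}(A)=\ker C$ and vanishes off $S$, its restriction $u^{\mathrm{opt}}$ satisfies $C_S u^{\mathrm{opt}}=0$ with $u^{\mathrm{opt}}_k=A(k,:)x^{\mathrm{opt}}=1$; small positive scalings of $u^{\mathrm{opt}}$ lie in $\widehat{W}$, so $\max\{u_k:u\in\widehat{W}\}>0$, while the box gives $\le 1$. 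Being attained at an integral vertex, this maximum is an integer, hence equals $1$: there is a vertex $u^\diamond\in\{-1,0,1\}^{S}$ with $u^\diamond_k=1$. Padding $u^\diamond$ with zeros off $S$ gives a vector of $\ker C=\operatorname{range}(A)$, i.e. $Ax^\diamond$ for some $x^\diamond$, which is feasible for (\ref{opt:l0}) and has $\|A(\bar{\mathcal{I}},:)x^\diamond\|_1=\|u^\diamond\|_1\le|S|=c^\star$; therefore $v_{LP}\le c^\star$, closing the chain.

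I expect Step (II) to be the main obstacle: isolating the right auxiliary polytope $\widehat{W}$, justifying that the left-null-space basis $C$ may be chosen totally unimodular (routine in the network case relevant to (\ref{opt:card_min_con_var}), but needing the pivoting fact in the general totally unimodular setting), and the observation that maximizing a single coordinate over an integral polytope squeezed strictly above $0$ and at most $1$ must return $1$. Step (I) and the final sandwiching are then routine once the total-unimodularity bookkeeping is in place.
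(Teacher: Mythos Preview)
Your argument is correct. Step~(I) is essentially the paper's Lemma~\ref{thm:L1_int_sol}: both obtain integrality of the optimal basic feasible solution by checking that the standard-form constraint matrix of (\ref{opt:l1_LP}) is totally unimodular (the paper argues via Laplace expansion, you via closure of TU under column duplication/negation and appending unit columns---same content). The genuine difference is in Step~(II), which plays the role of the paper's Lemma~\ref{thm:linf_bound}. The paper proves that lemma by \emph{reusing} Lemma~\ref{thm:L1_int_sol}: given an $l_0$-optimizer $x^\star$ with support $\bar{\mathcal{I}}_{x^\star}$, it sets up the modified LP (\ref{opt:l1_LP})$'$ in which everything outside $\bar{\mathcal{I}}_{x^\star}$ is forced to zero, and then invokes Lemma~\ref{thm:L1_int_sol} on that LP to extract a $\{-1,0,1\}$-valued replacement. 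You instead pass to the image side: you build the box-constrained polytope $\widehat{W}=\{u\in\mathbb{R}^{S}:C_S u=0,\ \|u\|_\infty\le 1\}$ from a left-null-space basis $C$ of $A$, use integrality of its vertices, and squeeze $\max_{\widehat{W}} u_k$ between $0$ and $1$ to land on $1$. Both routes produce the same rounded point and hence the same inequality $v_{LP}\le c^\star$. The paper's route is more self-contained (one TU computation, reused twice), whereas yours imports an extra fact---that a TU matrix admits a TU basis for its left null space---which is true (it follows from pivoting preserving total unimodularity \cite{ACCO_Schrijver10}, or equivalently from duality of regular matroids) but is an additional ingredient you would need to justify carefully in the general TU case; in the network case of (\ref{opt:card_min_con_var}) your fundamental-cycle remark already suffices. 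Either way the sandwich $c^\star\le\|w^\star\|_0\le\|w^\star\|_1=v_{LP}\le c^\star$ closes and yields the theorem.
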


\begin{remark} \label{rmk:LP_procedure}
  Theorem~\ref{thm:l0_l1_solution} provides a complete procedure for solving (\ref{opt:l0}) via (\ref{opt:l1_LP}). If the standard form LP problem in (\ref{opt:l1_LP}) is feasible, then it contains at least one basic feasible solution (see the definition in Section~\ref{subsec:definition}). Together with the fact that the objective value is bounded from below (e.g., by zero), \cite[Theorem~2.8, p.~66]{BT97} implies that problem~(\ref{opt:l1_LP}) contains at least one optimal basic feasible solution, which can be used to construct an optimal solution to (\ref{opt:l0}) according to Theorem~\ref{thm:l0_l1_solution}. Conversely, if the feasible set of (\ref{opt:l1_LP}) is empty, then the feasible set of (\ref{opt:l0}) must also be empty because a feasible solution to (\ref{opt:l0}) can be used to construct a feasible solution to (\ref{opt:l1_LP}).
\end{remark}
\begin{remark} \label{rmk:LP_simplex}
  To ensure that an optimal basic feasible solution to (\ref{opt:l1_LP}) is found if one exists, the simplex method (e.g., \cite[Chapter~3]{BT97}) can be used to solve (\ref{opt:l1_LP}).
\end{remark}

The proof of Theorem~\ref{thm:l0_l1_solution} will be given in Section~\ref{sec:proof}. Before that, the related work are reviewed, and the assumption in (\ref{eqn:no_inj_assumption}) is discussed.

\section{Related Work} \label{sec:related_work}

\subsection{Rationale of the no injection assumption in (\ref{eqn:no_inj_assumption})} \label{subsec:H_assumptions}
Consider the case of (\ref{opt:card_min_con_protected}) where $\mathcal{I}$ corresponds only to line power flow measurements, then with the definition of $H$ in (\ref{def:H_matrix}) it can be verified that (\ref{opt:card_min_con_protected}) is equivalent to the following:
\begin{equation} \label{opt:card_min_con_obj_sum}
  \begin{array}{cl}
    \mathop{\rm minimize}\limits_{\Delta \theta \in \mathbb{R}^n} & \left\| P(\bar{\mathcal{I}},:) B^T \Delta \theta \right\|_0 + \left\| Q B^T D B \Delta \theta \right\|_0 \vspace{1mm} \\
    \textrm{subject to} & P(k,:) B^T \Delta \theta = 1 \vspace{1mm} \\
    & P(\mathcal{I},:) B^T \Delta \theta = 0.
  \end{array}
\end{equation}
This indicates that the considered problem in (\ref{opt:card_min_con_var}) is a relaxation \cite{BT97} of the general case in (\ref{opt:card_min_con_obj_sum}). \cite{SSJ_CDC2011_mincut} utilizes this observation and obtains satisfactory suboptimal solution to (\ref{opt:card_min_con_protected}). Alternatively, \cite{SSJ_ckt} considers indirectly accounting for the term $\left\| Q B^T D B \Delta \theta \right\|_0$ in the objective function of (\ref{opt:card_min_con_obj_sum}). \cite{SSJ_ckt} demonstrates that solving the following problem provides satisfactory suboptimal solution to (\ref{opt:card_min_con_obj_sum})
\begin{equation} \label{opt:card_min_con_var1}
  \begin{array}{cl}
    \mathop{\rm minimize}\limits_{\Delta \theta \in \mathbb{R}^n} & \left\| \tilde{P}(\bar{\mathcal{\tilde{I}}},:) B^T \Delta \theta \right\|_0 \vspace{1mm} \\
    \textrm{subject to} & \tilde{P}(\tilde{k},:) B^T \Delta \theta = 1 \vspace{1mm} \\
    & \tilde{P}(\mathcal{\tilde{I}},:) B^T \Delta \theta = 0,
  \end{array}
\end{equation}
with appropriately defined $\tilde{P}$, $\tilde{I}$ and $\tilde{k}$. Notice that (\ref{opt:card_min_con_var1}) has the same form as (\ref{opt:card_min_con_var}). In conclusion, the ``no injection assumption'' in (\ref{eqn:no_inj_assumption}) which leads to (\ref{opt:card_min_con_var}) introduces limitation, but it need not be as restrictive as it might appear. The proposed result in Theorem~\ref{thm:l0_l1_solution} still leads to a LP based approach to obtain suboptimal solutions to (\ref{opt:card_min_con_obj_sum}) (and hence (\ref{opt:card_min_con_protected})). 

\subsection{Relationship with minimum cut based results} \label{subsec:relationship_mincut}
Nevertheless, the main strength of the current result lies in the fact that it solves problem (\ref{opt:l0}) where the $A$ matrix is totally unimodular. (\ref{opt:l0}) includes (\ref{opt:card_min_con_var}) as a special case where the corresponding constraint matrix is a transposed graph incidence matrix. This distinguishes the current work with other ones such as \cite{SSJ_CDC2011_mincut,SSJ_ckt,HJJSS12} which specialize in solving (\ref{opt:card_min_con_var}) using graph-based minimum cut algorithms (e.g., \cite{Stoer:1997:SMA:263867.263872}). One example of $A$ which is totally unimodular but not associated with a graph is the matrix with consecutive ones property (i.e., if either for each row or for each column, the 1's appear consecutively) \cite{Schrijver98}. For a possible application, consider a networked control system \cite{HNY07,NCS_BHJ} with one controller and $n$ sensor nodes. Each node contains a scalar state value, constant over a period of $m$ time slots. The nodes need to transmit their state values through a shared channel to the controller. Each node can keep transmitting over an arbitrary period of consecutive time slots. At each time slot, the measurement transmitted to the controller is the sum of the state values of all transmitting nodes. Denote $z \in \mathbb{R}^m$ as the vector of measurements transmitted over all time slots, and $\theta \in \mathbb{R}^n$ as the vector of node state values. Then the measurements and the states are related by $z = A \theta$, where $A \in \mathbb{R}^{m \times n}$ is a $(0,1)$ matrix with consecutive ones in the each column. Solving the observability problem in (\ref{opt:card_min_obsv}) with $H = A$ can identify the vulnerable measurement slots, which should have higher priority in communication.

\subsection{Relationship with compressed sensing type results}
Problem (\ref{opt:l0}) can be written in a form more common in the literature. Consider only the case where the null space of $A^T$ is not empty (otherwise ${\rm rank}(A) = m$ and (\ref{opt:l0}) is trivial). With a change of decision variable $z = A x$, (\ref{opt:l0}) can be posed as:
\begin{equation} \label{opt:l0_reformulated}
  \begin{array}{cl}
    \mathop{\rm minimize}\limits_{z \in \mathbb{R}^m} & {\|z(\bar{\mathcal{I}})\|}_0 \\
    \textrm{subject to} & \,\,\,\, L z = 0 \\
    & z(\mathcal{I}) = 0 \\
    & z(k) = 1,
  \end{array}
\end{equation}
where $L$ has full rank and $L A = 0$, and $z(\bar{\mathcal{I}})$ denotes a sub-vector of $z$ containing the entries corresponding to the index set $\bar{\mathcal{I}}$. (\ref{opt:l0_reformulated}) can be written as the cardinality minimization problem considered, for instance, in \cite{CT05,DE03,CWS_Weight_l1,BDE09_siam_review}:
\begin{equation} \label{opt:min_card}
  \begin{array}{cl}
    \mathop{\rm minimize}\limits_{\tilde{z}} & {\|\tilde{z}\|}_0 \\
    \textrm{subject to} & \Phi \tilde{z} = b,
  \end{array}
\end{equation}
with appropriately defined matrix $\Phi$ and vector $b$. In this subsection, we restrict the discussion to the standard case. That is, (\ref{opt:min_card}) is feasible and $\Phi$ is a full rank matrix with more columns than rows. As (\ref{opt:min_card}) is well-studied, certain conditions regarding when its optimal solution can be obtained by $l_1$ relaxation are known. For example, \cite{DE03,GN03} report a sufficient condition based on mutual coherence, which is denoted as $\mu(\Phi)$ and defined as
\begin{equation} \label{def:mutual_coherence}
  \mu(\Phi) = \max\limits_{i \neq j} \frac{|{\Phi(:,i)}^T \Phi(:,j)|}{{\|\Phi(:,i)\|}_2 {\|\Phi(:,j)\|}_2}. \vspace{1mm}
\end{equation}
The sufficient condition \cite{BDE09_siam_review} states that if there exists a feasible solution $\tilde{z}$ in (\ref{opt:min_card}) which is sparse enough:
\begin{equation} \label{eqn:sb_mc}
  {\left\|\tilde{z}\right\|}_0 < \frac{1}{2} \Big(1+\frac{1}{\mu(\Phi)} \Big),
\end{equation}
then $\tilde{z}$ is the unique optimal solution to (\ref{opt:min_card}) and its $l_1$ relaxation (i.e., problem (\ref{opt:min_card}) with ${\|\tilde{z}\|}_1$ replacing ${\|\tilde{z}\|}_0$). Another well-known sufficient condition is based on the restricted isometry property (RIP) \cite{CT05,Candes2008589}. For any integer $s$, the RIP constant $\delta_s$ of matrix $\Phi$ is the smallest number satisfying
\begin{equation} \label{def:RIP}
  (1-\delta_s) {\|x\|}_2^2 \le {\left\|\Phi x\right\|}_2^2 \le (1+\delta_s) {\|x\|}_2^2
\end{equation}
for all vector $x$ such that ${\|x\|}_0 \le s$. The RIP-based sufficient condition \cite{Candes2008589} states that if for some $s$, $\Phi$ has a RIP constant $\delta_{2s} < \sqrt{2}-1$, then any $\tilde{z}$ satisfying $\Phi \tilde{z} = b$ and ${\|\tilde{z}\|}_0 \le s$ is necessarily the unique optimal solution to both (\ref{opt:min_card}) and its $l_1$ relaxation. It has been shown that certain type of randomly generated matrices satisfy the above conditions with overwhelming probabilities (e.g., \cite{CT05} provides a RIP-related result). However, the above conditions might not apply to (\ref{opt:l0}), which is the focus of this paper. For instance, consider $A$ in (\ref{opt:l0}) being a submatrix of the transpose of the incidence matrix of the 6-bus power network from \cite{WW96}:
\begin{displaymath}
  A = \begin{bmatrix}
    1 & -1 & 0 & 0 & 0 & 0 \\
    0 & 1 & -1 & 0 & 0 & 0 \\
    0 & 0 & 0 & 1 & -1 & 0 \\
    0 & 0 & 0 & 0 & 1 & -1 \\
    1 & 0 & 0 & -1 & 0 & 0 \\
    0 & 1 & 0 & 0 & -1 & 0 \\
    0 & 0 & 1 & 0 & 0 & -1
  \end{bmatrix}.
\end{displaymath}
Let $k = 6$, and $\mathcal{I} = \emptyset$. Then the corresponding $\Phi$ in (\ref{opt:min_card}) and $b$ are
\begin{equation} \label{eqn:Phi_b_counterex}
  \Phi = \begin{bmatrix}
    1 & 1 & -1 & -1 & -1 & 0 & 1 \\
    1 & 0 & -1 & 0 & -1 & 1 & 0 \\
    0 & 0 & 0 & 0 & 0 & 1 & 0
  \end{bmatrix}\quad
  b = \begin{bmatrix}
    0 \\ 0 \\ 1
  \end{bmatrix}.
\end{equation}
For this $\Phi$, (\ref{def:mutual_coherence}) implies that $\mu(\Phi) = 1$. Therefore, the sparsity bound in (\ref{eqn:sb_mc}) becomes ${\|\tilde{z}\|}_0 < 1$. This is too restrictive to be practical. Similarly, for all $s \ge 1$, the RIP constants $\delta_{2s}$ are at least one because $\Phi(:,1) = -\Phi(:,3)$. Hence the RIP-based sufficient condition would not be applicable either. Nevertheless, the failure to apply these sufficient conditions here does not mean that it is impossible to show that $l_1$ relaxation can exactly solve (\ref{opt:min_card}). The mutual coherence and RIP-based conditions characterize when a \emph{unique} optimal solution exists for both (\ref{opt:min_card}) and its $l_1$ relaxation, while in this paper uniqueness is not required. Indeed, for (\ref{opt:min_card}) with $\Phi$ and $b$ defined in (\ref{eqn:Phi_b_counterex}), both ${\begin{bmatrix} -1 & 0 & 0 & -1 & 0 & 1 & 0 \end{bmatrix}}^T$ and ${\begin{bmatrix} -1 & 1 & 0 & 0 & 0 & 1 & 0 \end{bmatrix}}^T$ are optimal (this can be verified by inspection). Using the CPLEX LP solver \cite{CPLEX} in MATLAB to solve the $l_1$ relaxation leads to the first optimal solution. It is the main contribution of this paper to show that this is the case in general when (\ref{opt:min_card}) is defined by (\ref{opt:l0}), even though the optimal solution might not be unique. The reason why the proposed result is applicable is that it is based on a polyhedral combinatorics argument, which is different from those of the mutual coherence and RIP based results.


\section{Proof of the Main Result} \label{sec:proof}

\subsection{Definitions} \label{subsec:definition}
The proof requires the following definitions:
\begin{definition} \label{def:equivalence}
  Two optimization problems are {\bf equivalent} if there is an one-to-one correspondence of their instances. The corresponding instances either are both infeasible, both unbounded or both have optimal solutions. In the last case, it is possible to construct an optimal solution to one problem from an optimal solution to the other problem and vice versa. In addition, the two problems have the same optimal objective value.
\end{definition}


\begin{definition}
  A {\bf polyhedron} in $\mathbb{R}^p$ is a subset of $\mathbb{R}^p$ described by linear equality and inequality constraints. A standard form polyhedron (as associated with a standard form LP problem instance) is specified by $\left\{\theta \; \vline \; C \theta = d, \; \theta \ge 0\right\}$ for some given matrix $C$ and vector $d$.
\end{definition}

\begin{definition}
  A {\bf basic solution} \cite[p.~50]{BT97} of a polyhedron in $\mathbb{R}^p$ is a vector satisfying all equality constraints. In addition, out of all active constraints $p$ of them are linearly independent. For a standard form polyhedron with a constraint matrix of full row rank, basic solutions can alternatively be defined by the following statement \cite[p.~53]{BT97}:
\begin{theorem} \label{thm:BFS_standard_form}
   Consider a polyhedron $\left\{\theta \; \vline \; C \theta = d, \; \theta \ge 0\right\}$, and assume that $C \in \mathbb{R}^{l \times p}$ and $C$ has full row rank. A vector $\theta$ is a basic solution if and only if $C \theta = d$ and there exists an index set $\mathcal{J} \subset \{1,2,\ldots,p\}$, with $|\mathcal{J}| = l$, such that $\det(C(:,\mathcal{J})) \neq 0$ and $\theta(i) = 0$ if $i \notin \mathcal{J}$.
\end{theorem}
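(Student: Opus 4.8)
The plan is to prove the biconditional directly from the general definition of a basic solution stated just above the theorem, first by making the active constraints explicit and then treating the two implications separately with elementary linear algebra on submatrices of $C$. For a point $\theta$ satisfying $C\theta = d$, the $l$ equality constraints are active by hypothesis and have constraint vectors equal to the rows $C(i,:)$, while the inequality $\theta(i) \ge 0$ is active exactly when $\theta(i)=0$, with constraint vector $e_i^T$. Hence $\theta$ is a basic solution if and only if $C\theta = d$ and the family $\{C(i,:)\}_{i=1}^{l} \cup \{e_i^T : \theta(i)=0\}$ contains $p$ linearly independent vectors, i.e.\ spans $\mathbb{R}^p$. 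Both implications then reduce to translating this spanning condition into the stated determinant condition.

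For the ``if'' direction, given an index set $\mathcal{J}$ with $|\mathcal{J}|=l$, $\det(C(:,\mathcal{J})) \neq 0$, and $\theta(i)=0$ for $i \notin \mathcal{J}$, I would assemble the $p \times p$ matrix $M$ whose first $l$ rows are $C(1,:),\dots,C(l,:)$ and whose remaining $p-l$ rows are the vectors $e_i^T$ with $i \in \bar{\mathcal{J}}$; all of these are active at $\theta$. Permuting the columns so that those indexed by $\mathcal{J}$ precede those indexed by $\bar{\mathcal{J}}$ makes $M$ block upper triangular with diagonal blocks $C(:,\mathcal{J})$ and $I_{p-l}$, so $|\det M| = |\det C(:,\mathcal{J})| \neq 0$. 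The $p$ rows of $M$ are therefore linearly independent, which together with $C\theta = d$ certifies that $\theta$ is a basic solution.

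For the ``only if'' direction, suppose $\theta$ is a basic solution, so $C\theta = d$ and the active constraint vectors span $\mathbb{R}^p$. Set $N = \{i : \theta(i) \neq 0\}$, so the active inequality vectors are exactly $\{e_i^T : i \notin N\}$. I would project $\mathbb{R}^p$ onto the coordinates indexed by $N$: this map sends each $e_i^T$ with $i \notin N$ to $0$ and each row $C(i,:)$ to its restriction $C(i,N)$. Since the active vectors span $\mathbb{R}^p$, their images span the image of the projection, forcing $\{C(i,N)\}_{i=1}^{l}$ to span $\mathbb{R}^{|N|}$; equivalently $C(:,N)$ has full column rank $|N|$ (in particular $|N| \le l$). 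Because $C$ has full row rank $l$, its column space is all of $\mathbb{R}^l$, so the $|N|$ independent columns $C(:,N)$ extend to $l$ independent columns indexed by some $\mathcal{J} \supseteq N$ with $\det(C(:,\mathcal{J})) \neq 0$ and $|\mathcal{J}| = l$. Finally $\mathcal{J} \supseteq N$ gives $\bar{\mathcal{J}} \subseteq \bar{N}$, so $\theta(i)=0$ for every $i \notin \mathcal{J}$, which is the required index set.

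I expect the delicate step to be the column-rank argument in the ``only if'' direction: converting the geometric spanning statement into ``the submatrix $C(:,N)$ has full column rank,'' and then correctly invoking the full-row-rank hypothesis on $C$ to extend $N$ to an index set of size exactly $l$ whose corresponding columns are nonsingular. The ``if'' direction and the bookkeeping of which constraints are active are routine by comparison.
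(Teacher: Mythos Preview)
The paper does not supply its own proof of this theorem: it is quoted as a known characterization from \cite[p.~53]{BT97} and used as a black box in the proof of Lemma~\ref{thm:L1_int_sol}. Your argument is correct and is essentially the standard textbook proof---the block-triangular determinant computation for the ``if'' direction and, for the ``only if'' direction, the observation that the columns of $C$ indexed by the support $N$ of $\theta$ must be linearly independent (your projection onto the $N$-coordinates makes this transparent), followed by extension to a basis using the full-row-rank hypothesis.
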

\end{definition}

\begin{definition} \label{def:BFS}
  A {\bf basic feasible solution} \cite[p.~50]{BT97} of a polyhedron is a basic solution which is also feasible. By convention, the terminology ``a basic feasible solution to a LP problem instance'' should be understood as a basic feasible solution of the polyhedron which defines the feasible set of the instance.
\end{definition}

\subsection{Proof} \label{subsec:proof}
Two lemmas, key to the proof, are presented first. The first lemma states that problem (\ref{opt:l1_LP}), as set up by $l_1$ relaxation, has integer-valued optimal basic feasible solutions.
\begin{lemma} \label{thm:L1_int_sol}
Let $(x_+^\star,x_-^\star,y_+^\star,y_-^\star)$ be an optimal basic feasible solution to (\ref{opt:l1_LP}). Then it holds that $x^\star(i) \triangleq (x_+^\star(i) - x_-^\star(i)) \in \{-1,0,1\}$ for all $1 \le i \le n$. In addition, $y^\star(j) \triangleq |A(\bar{\mathcal{I}}(j),:) x^\star| \in \{0,1\}$ for all $1 \le j \le |\bar{\mathcal{I}}|$, where $\bar{\mathcal{I}}(j)$ denotes the $j^{\rm th}$ element of $\bar{\mathcal{I}}$.
\end{lemma}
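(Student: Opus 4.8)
The plan is to exploit total unimodularity of $A$ together with the defining property of basic feasible solutions (Theorem~\ref{thm:BFS_standard_form}) to conclude that every basic feasible solution of the polyhedron in (\ref{opt:l1_LP}) is integer-valued, and then to argue separately that the integer entries in fact lie in $\{-1,0,1\}$. First I would write the constraint system of (\ref{opt:l1_LP}) in the standard form $C\xi = d$, $\xi \geq 0$, where $\xi = (x_+,x_-,y_+,y_-)$. Collecting the three blocks of equality constraints, the constraint matrix $C$ has the shape
\begin{displaymath}
  C = \begin{bmatrix}
    A(\bar{\mathcal{I}},:) & -A(\bar{\mathcal{I}},:) & -I & I \\
    A(k,:) & -A(k,:) & 0 & 0 \\
    A(\mathcal{I},:) & -A(\mathcal{I},:) & 0 & 0
  \end{bmatrix},
\end{displaymath}
and the right-hand side is $d = (0,1,0)$, which is integer. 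The key structural claim is that $C$ is totally unimodular: it is obtained from the totally unimodular matrix $A$ by duplicating columns with a sign flip (which preserves total unimodularity, since a square submatrix using both copies of some column is singular, and otherwise it is, up to sign changes of columns, a submatrix of $A$) and by adjoining $\pm$ columns of an identity matrix (appending a column of an identity matrix to a totally unimodular matrix preserves the property — expand the determinant of any square submatrix along that column). I would make this chain of reductions explicit, citing the standard closure properties of totally unimodular matrices from \cite{ACCO_Schrijver10}.

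Given that $C$ is totally unimodular and $d$ is integer, the next step is routine: take an optimal basic feasible solution $\xi^\star = (x_+^\star,x_-^\star,y_+^\star,y_-^\star)$. After deleting redundant equality rows so that the remaining constraint matrix $\tilde C$ has full row rank $l$ (this does not change the feasible set, and $\tilde C$ inherits total unimodularity as a submatrix), Theorem~\ref{thm:BFS_standard_form} gives an index set $\mathcal{J}$ with $|\mathcal{J}| = l$, $\det(\tilde C(:,\mathcal{J})) \neq 0$, and $\xi^\star(i) = 0$ off $\mathcal{J}$. Since $\tilde C(:,\mathcal{J})$ is an invertible square submatrix of a totally unimodular matrix, $\det(\tilde C(:,\mathcal{J})) \in \{-1,1\}$, so $\tilde C(:,\mathcal{J})^{-1}$ is integer by Cramer's rule; hence $\xi^\star(\mathcal{J}) = \tilde C(:,\mathcal{J})^{-1} \tilde d$ is an integer vector, and therefore $x_+^\star, x_-^\star, y_+^\star, y_-^\star$ all have integer entries. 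In particular each $x^\star(i) = x_+^\star(i) - x_-^\star(i)$ is an integer, and each $y^\star(j)$ — which at an optimal basic feasible solution equals $y_+^\star(j) + y_-^\star(j) = |A(\bar{\mathcal{I}}(j),:)x^\star|$, using that at optimality one never has both $y_+^\star(j)>0$ and $y_-^\star(j)>0$ and that the first block of constraints forces $y_+^\star(j)-y_-^\star(j) = A(\bar{\mathcal{I}}(j),:)x^\star$ — is a nonnegative integer.

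It remains to sharpen "integer" to $\{-1,0,1\}$ for $x^\star$ and to $\{0,1\}$ for $y^\star$, and this is the step I expect to be the main obstacle, because it is genuinely an \emph{optimality} argument rather than a polyhedral one — integrality alone would only give $x^\star(i) \in \mathbb{Z}$. The idea is a scaling/contradiction argument: suppose $\|x^\star\|_\infty = M \geq 2$. The constraint $A(k,:)x^\star = 1$ together with $A$ totally unimodular (so all entries of $A$, in particular of row $k$, are in $\{-1,0,1\}$) should be used to show that a feasible point with a strictly smaller objective exists — concretely, I would try to produce an alternative feasible $\hat x$ by a suitable combination or rescaling whose objective $\|A(\bar{\mathcal{I}},:)\hat x\|_1$ is no larger, and then invoke the integrality just proved to push the entries down to $\{-1,0,1\}$ without increasing cost; since $A(k,:)$ has $\pm1$ entries, $\|A(\bar{\mathcal{I}},:)x^\star\|_1 \geq |A(k,:)x^\star| = 1$ forces the optimal value to be a positive integer, and I would argue it cannot exceed the value attained by a $\{-1,0,1\}$-valued feasible point. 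Once $\|x^\star\|_\infty \leq 1$, every entry of $A(\bar{\mathcal{I}},:)x^\star$ lies in $\{-1,0,1\}$ is still not immediate (a row of $A$ could have several nonzero entries), so the final sub-step is to combine $\|x^\star\|_\infty \le 1$ with the basic-feasible-solution characterization once more, or with the complementary slackness structure of (\ref{opt:l1_LP}), to rule out $|A(\bar{\mathcal{I}}(j),:)x^\star| \geq 2$; I anticipate needing here precisely that $x^\star$ arises from a \emph{basic} feasible solution, not merely a feasible one, so that the support of $x^\star$ interacts with the active rows in a controlled way.
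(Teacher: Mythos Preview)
Your first two paragraphs are essentially the paper's setup: write the constraints in standard form, reduce to full row rank, argue that the constraint matrix is totally unimodular, and invoke Theorem~\ref{thm:BFS_standard_form} together with Cramer's rule on a basic feasible solution. That part is fine and matches the paper.

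The gap is your third paragraph. You conclude only that the basic feasible solution is \emph{integer}, and then treat the sharpening to $\{-1,0,1\}$ as ``genuinely an optimality argument'' to be handled by a separate (and vague) scaling/contradiction step. This is a misdiagnosis: the $\{-1,0,1\}$ bound is \emph{purely polyhedral} and holds for every basic solution, optimal or not. The paper obtains it by applying Cramer's rule to the \emph{augmented} matrix $[\,C\; d\,]$: since $d$ is a standard unit vector, Laplace expansion along the $d$-column (and along the $\pm I$ columns) reduces the determinant of every relevant $m_B\times m_B$ submatrix of $[\,C\; d\,]$ to a determinant of a square submatrix of the totally unimodular block, hence to $\{-1,0,1\}$; Cramer's rule then gives each entry of the basic solution as a ratio of two such determinants. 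Equivalently, within your own framework you were one observation away: the inverse of an invertible square submatrix of a totally unimodular matrix has all entries in $\{-1,0,1\}$ (its cofactors are determinants of submatrices of a TU matrix, and the determinant is $\pm 1$), and since $\tilde d$ is a single standard basis vector, $\tilde C(:,\mathcal{J})^{-1}\tilde d$ is just one column of that inverse. So $\xi^\star \in \{-1,0,1\}^{2n+2|\bar{\mathcal{I}}|}$ follows immediately, with no appeal to optimality. Feasibility (nonnegativity) then forces each of $x_+^\star, x_-^\star, y_+^\star, y_-^\star$ into $\{0,1\}$, and the \emph{only} place optimality is used is the final trivial step ruling out $y_+^\star(j)=y_-^\star(j)=1$.

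Your proposed optimality-based route is not only unnecessary but, as written, does not close: you never produce the alternative feasible point, and the fallback to ``combine $\|x^\star\|_\infty\le 1$ with the basic-feasible-solution characterization once more'' to rule out $|A(\bar{\mathcal{I}}(j),:)x^\star|\ge 2$ is circular --- that is exactly what the polyhedral argument above delivers in one stroke.
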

\begin{proof}
   Assume that the feasible set of (\ref{opt:l1_LP}) is nonempty, otherwise there is no basic feasible solution (cf.\ Definition~\ref{def:BFS}). The following two claims are made:
   \begin{description}
     \item[(a)] $A(k,:)$ cannot be a linear combination of the rows of $A(\mathcal{I},:)$.
     \item[(b)] There exists ${\mathcal{I}}^\prime \subset \mathcal{I}$ such that either ${\mathcal{I}}^\prime = \emptyset$ or the rows of $A({\mathcal{I}}^\prime,:)$ are linearly independent. In addition, in both cases $A({\mathcal{I}}^\prime,:) \theta = 0$ and $A(\mathcal{I},:) \theta = 0$ define the same constraints.
   \end{description}
   Claims (a) and (b) together imply that problem (\ref{opt:l1_LP}) can be written as a standard form LP problem with a constraint matrix with full row rank (i.e., matrix $C$ below):
   \begin{equation} \label{opt:L1_1_standard_form}
    \begin{array}{cl}
      \mathop{\rm minimize}\limits_{\theta} & f^T \theta \\
      \textrm{subject to} & C \theta = d \vspace{1mm} \\
      & \,\,\,\,\, \theta \ge 0,
    \end{array}
  \end{equation}
   with
\begin{equation} \label{eqn:Cd_data}
\begin{array}{l}
  C \triangleq \begin{bmatrix}
        A(\bar{\mathcal{I}},:) & -A(\bar{\mathcal{I}},:) & -I_{|\bar{\mathcal{I}}|} & I_{|\bar{\mathcal{I}}|} \\
        A({\mathcal{I}}^\prime,:) & -A({\mathcal{I}}^\prime,:) & 0 & 0 \\
        A(k,:) \:  & -A(k,:) \:  & 0 & 0
      \end{bmatrix} \\
       d \triangleq \begin{bmatrix} 0 \\ 0 \\ 1 \end{bmatrix} \; \theta \triangleq \begin{bmatrix}
        x_+ \\ x_- \\ y_+ \\ y_-
      \end{bmatrix} \; f \triangleq \begin{bmatrix}
        0_{n \times 1} \\ 0_{n \times 1} \\ \mathbf{1}_{|\bar{\mathcal{I}}| \times 1} \\ \mathbf{1}_{|\bar{\mathcal{I}}| \times 1}
      \end{bmatrix},
\end{array}
\end{equation}
where $I_{|\bar{\mathcal{I}}|}$ is an identity matrix of dimension $|\bar{\mathcal{I}}|$, and $\mathbf{1}$ is a vector of all ones.

To see the claims, first note that (a) is implied by the feasibility of (\ref{opt:l1_LP}). For (b), If $\mathcal{I} = \emptyset$ or $A(\mathcal{I},:) = 0$, then set ${\mathcal{I}}^\prime = \emptyset$. Otherwise, there exists ${\mathcal{I}}^\prime \subset \mathcal{I}$ with the properties that $|{\mathcal{I}}^\prime| = {\rm rank}(A(\mathcal{I},:))$, $A({\mathcal{I}}^\prime,:)$ has linearly independent rows and $A(\mathcal{I},:) = S A({\mathcal{I}}^\prime,:)$ for some matrix $S$. On the other hand, $A({\mathcal{I}}^\prime,:) = S^\prime A(\mathcal{I},:)$ for some matrix $S^\prime$, because ${\mathcal{I}}^\prime \subset \mathcal{I}$. Hence, $A(\mathcal{I},:) \theta = 0$ and $A({\mathcal{I}}^\prime,:) \theta = 0$ define the same constraints. This shows (b).

The next step of the proof is to show that every basic solution of (\ref{opt:L1_1_standard_form}) has its entries being either $-1$, 0 or 1. Denote the matrix $B_1$ as the first $2n$ columns of $C$, and let $\tilde{B}_1$ be any square submatrix of $B_1$. If $\tilde{B}_1$ has two columns (or rows) which are the same or negative of each other, then $\det(\tilde{B}_1) = 0$. Otherwise, $\tilde{B}_1$ is a (possibly row and/or column permuted) square submatrix of $A$, and $A$ is assumed to be totally unimodular. Hence, $\det(\tilde{B}_1) \in \{-1,0,1\}$ and $B_1$ is totally unimodular. Next consider the matrix $B$ defined as
\begin{displaymath}
\begin{array}{l}
  B \triangleq \begin{bmatrix} C & d \end{bmatrix} =
    \begin{bmatrix}
        A(\bar{\mathcal{I}},:) & -A(\bar{\mathcal{I}},:) & -I_{|\bar{\mathcal{I}}|} & I_{|\bar{\mathcal{I}}|} & 0 \\
        A({\mathcal{I}}^\prime,:) & -A({\mathcal{I}}^\prime,:) & 0 & 0 & 0 \\
        A(k,:) \:  & -A(k,:) \:  & 0 & 0 & 1
      \end{bmatrix} \vspace{1mm} \\
      \;\;\;\; = \begin{bmatrix}
         B_1 & \begin{matrix} -I_{|\bar{\mathcal{I}}|} & I_{|\bar{\mathcal{I}}|} & 0 \\ 0 & 0 & 0 \\ 0 & 0 & 1 \end{matrix}
      \end{bmatrix}.
\end{array}
\end{displaymath}
Denote the number of rows and the number of columns of $B$ as $m_B$ and $n_B$ respectively. Let $\mathcal{J} \subset \{1,2,\ldots,n_B\}$ be any set of column indices of $B$ such that $|\mathcal{J}| = m_B$ (so that $B(:,\mathcal{J})$ is square). If $B(:,\mathcal{J})$ contains only columns of $B_1$, then $\det(B(:,\mathcal{J})) \in \{-1,0,1\}$ since $B_1$ is totally unimodular. Otherwise, by repeatedly applying Laplace expansion on the columns of $B(:,\mathcal{J})$ which are not columns of $B_1$, it can be shown that $\det(B(:,\mathcal{J}))$ is equal to the determinant of a square submatrix of $B_1$, which can only be $-1$, 0 or 1. Hence, by Cramer's rule the following holds: If $v$ is the solution to the following system of linear equations
\begin{equation} \label{eqn:BS}
\begin{array}{l}
    B(:,\mathcal{J}) \; v = B(:,n_B), \; \mathcal{J} \subset \{1,\ldots,n_B-1\}, \\
     \quad \quad \;\; |\mathcal{J}| = m_B, \; {\rm and} \; \det(B(:,\mathcal{J})) \neq 0,
\end{array}
\end{equation}
then
\begin{equation} \label{eqn:BS_int}
    v(j) \in \{-1,0,1\}, \quad \forall \; j.
\end{equation}
Theorem~\ref{thm:BFS_standard_form} and (\ref{eqn:BS}) together imply that the nonzero entries of all basic solutions to (\ref{opt:L1_1_standard_form}) are either $-1$, 0 or 1. Therefore, the basic feasible solutions, which are also basic solutions, to the polyhedron in (\ref{opt:L1_1_standard_form}) also satisfy this integrality property.

Finally, let $(x_+^\star,x_-^\star,y_+^\star,y_-^\star)$ be an optimal basic feasible solution. Then feasibility (i.e., nonnegativity) implies that
\begin{equation} \label{eqn:x_star_L1_1_standard}
\begin{array}{l}
     x_+^{\star}(j) \in \{0,1\}, \quad x_-^{\star}(j) \in \{0,1\}, \\
     y_+^{\star}(j) \in \{0,1\}, \quad y_-^{\star}(j) \in \{0,1\}, \quad \forall \; j.
\end{array}
\end{equation}
The minimization excludes the possibility that, at optimality, $y_+^\star(j) = y_-^\star(j) = 1$. Hence, it is possible to define $x^\star$ and $y^\star$ such that
\begin{equation} \label{eqn:x_star_L1_1}
\begin{array}{l}
  x^\star(i) \triangleq (x_+^{\star}(i) - x_-^{\star}(i)) \in \{-1,0,1\} \quad \forall \; i \\
  y^\star(j) \triangleq (y_+^\star(j) + y_-^\star(j)) = |A(\bar{\mathcal{I}}(j),:) \; x^{\star}| \in \{0,1\} \quad \forall \; j.
\end{array}
\end{equation}
\end{proof}

The second lemma is concerned with a restricted version of (\ref{opt:l0}) with an infinity norm bound as follows:
\begin{equation} \label{opt:l0_NB}
  \begin{array}{cl}
      \mathop{\rm minimize}\limits_x & \left\|A(\bar{\mathcal{I}},:) \: x \right\|_0 \\
      \textrm{subject to} & A(k,:) \:  x = 1 \\
      & A(\mathcal{I},:) \: x = 0 \\
      & \,\,\,\, \left\|A x \right\|_\infty \le 1.
  \end{array}
\end{equation}
\begin{lemma} \label{thm:linf_bound}
  Optimization problems (\ref{opt:l0}) and (\ref{opt:l0_NB}) are equivalent.
\end{lemma}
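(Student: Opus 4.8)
The plan is to establish equivalence in the sense of Definition~\ref{def:equivalence} by showing a one-to-one correspondence between instances (which is trivial — the same data $A$, $k$, $\mathcal{I}$ defines instances of both problems) and then matching feasibility, boundedness, and optimal values. Since the objective $\|A(\bar{\mathcal{I}},:)x\|_0$ is always a nonnegative integer, neither problem can be unbounded, so the only real work is: (i) problem~(\ref{opt:l0_NB}) is feasible iff problem~(\ref{opt:l0}) is (both reduce to condition~I of Proposition~\ref{thm:con_obsv_eq}, i.e.\ $A(k,:)\neq 0$ together with the consistency of the $\mathcal{I}$-constraints, which is exactly claim~(a) inside the proof of Lemma~\ref{thm:L1_int_sol}); and (ii) the optimal values coincide, with an explicit construction of an optimal solution of one problem from that of the other.

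One direction of (ii) is immediate: any $x$ feasible for~(\ref{opt:l0_NB}) is feasible for~(\ref{opt:l0}) with the same objective value, so the optimal value of~(\ref{opt:l0}) is at most that of~(\ref{opt:l0_NB}). The substance is the reverse inequality. First I would argue that~(\ref{opt:l0}) always admits an optimal solution: its feasible set is a nonempty (when condition~I holds) polyhedron, the objective takes finitely many values on it, and on each stratum $\{x : \operatorname{supp}(A(\bar{\mathcal{I}},:)x) = S\}$ one can pick a vertex, so an optimum is attained. Now let $x$ be any optimal solution of~(\ref{opt:l0}), with support set $S \subseteq \bar{\mathcal{I}}$ of $A(\bar{\mathcal{I}},:)x$ and optimal value $|S|$. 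Consider the polyhedron $\{x' : A(k,:)x' = 1,\ A(\mathcal{I},:)x' = 0,\ A(S,:)x'$ has the same sign pattern as $A(S,:)x,\ A(\bar{\mathcal{I}}\setminus S,:)x' = 0\}$ — or more simply the linear system forcing the zero entries to stay zero and normalizing — and scale: if $\|Ax\|_\infty = c > 1$, then $x/c$ still satisfies $A(\mathcal{I},:)(x/c)=0$ and has the same support for $A(\bar{\mathcal{I}},:)(x/c)$, but $A(k,:)(x/c) = 1/c \neq 1$. So naive scaling breaks the normalization constraint; this is the main obstacle. The fix is to solve, as an LP feasibility problem, for an $x'$ in the affine subspace $\{A(k,:)x'=1,\ A(\mathcal{I},:)x'=0,\ A(\bar{\mathcal{I}}\setminus S,:)x'=0\}$ that additionally satisfies $\|Ax'\|_\infty \le 1$; since $A$ is totally unimodular and the entries of $A$ are bounded, and since this affine subspace is nonempty (it contains $x$) and the only binding requirement is an $\ell_\infty$ cap, I expect a basic-feasible-solution / integrality argument — essentially the same Cramer's-rule-with-totally-unimodular-submatrices computation carried out in the proof of Lemma~\ref{thm:L1_int_sol} — to furnish such an $x'$ with entries of $Ax'$ in $\{-1,0,1\}$, hence $\|Ax'\|_\infty \le 1$, whose objective $\|A(\bar{\mathcal{I}},:)x'\|_0$ is at most $|S|$ (it could only drop, never rise, since coordinates outside $S$ are pinned to zero). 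Then $x'$ is feasible for~(\ref{opt:l0_NB}) with value $\le |S|$, giving the reverse inequality and an explicit construction.

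The delicate point to get right is that the affine subspace cutting down to the support $S$ is genuinely nonempty and that a suitable basic point of it has the integrality property — one must make sure the $z = Ax$ change of variables (as used around~(\ref{opt:l0_reformulated})) keeps the relevant constraint matrix totally unimodular or a signed submatrix of $A$, exactly as in the $B_1$ argument of Lemma~\ref{thm:L1_int_sol}. Once that is in place, collecting the two inequalities on optimal values, the equality of feasibility status, and the impossibility of unboundedness yields equivalence per Definition~\ref{def:equivalence}. I would expect the authors' actual proof to proceed similarly, quite possibly by directly invoking Lemma~\ref{thm:L1_int_sol} applied to the LP relaxation of~(\ref{opt:l0}) restricted to the support, rather than redoing the determinant bookkeeping from scratch.
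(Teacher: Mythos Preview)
Your proposal is correct and follows essentially the same route as the paper: take an optimal $x^\star$ of (\ref{opt:l0}), restrict to its support $\bar{\mathcal{I}}_{x^\star}$, and then invoke Lemma~\ref{thm:L1_int_sol} on the LP (\ref{opt:l1_LP}) with $\bar{\mathcal{I}}$ replaced by $\bar{\mathcal{I}}_{x^\star}$ to obtain an optimal basic feasible solution $\tilde{x}^\star$ with $A\tilde{x}^\star \in \{-1,0,1\}^m$, which is then feasible for (\ref{opt:l0_NB}) with objective at most $|\bar{\mathcal{I}}_{x^\star}|$. Your anticipation in the last paragraph is exactly right --- the paper invokes Lemma~\ref{thm:L1_int_sol} on this restricted instance rather than redoing the Cramer's-rule bookkeeping.
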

\begin{proof}
  Suppose (\ref{opt:l0}) is feasible, then it has an optimal solution denoted as $x^\star$. Let $\bar{\mathcal{I}}_{x^\star} \subset \bar{\mathcal{I}}$ be the row index set such that $A(j,:) x^\star \neq 0$ if and only if $j \in \bar{\mathcal{I}}_{x^\star}$. Then it is claimed that there exists a common optimal solution to both (\ref{opt:l0}) and (\ref{opt:l0_NB}) with the same optimal objective value. The argument is as follows. The property of $x^\star$ implies the feasibility of ${(\ref{opt:l1_LP})}^\prime$, which is denoted as a variant of (\ref{opt:l1_LP}) with $\bar{\mathcal{I}}$ replaced by $\bar{\mathcal{I}}_{x^\star}$. By \cite[Corollary~2.2, p.~65]{BT97}, problem~${(\ref{opt:l1_LP})}^\prime$, as a standard form LP problem, has at least one basic feasible solution. Furthermore, since the optimal objective value of ${(\ref{opt:l1_LP})}^\prime$ is bounded from below (e.g., by zero), \cite[Theorem~2.8, p.~66]{BT97} implies that ${(\ref{opt:l1_LP})}^\prime$ has an optimal basic feasible solution $(\tilde{x}_+^\star,\tilde{x}_-^\star,\tilde{y}_+^\star,\tilde{y}_-^\star)$ which is integer-valued as specified by Lemma~\ref{thm:L1_int_sol}. Denote $\tilde{x}^\star \triangleq (\tilde{x}_+^\star - \tilde{x}_-^\star)$. Then $\tilde{x}^\star$ is feasible to both (\ref{opt:l0}) and (\ref{opt:l0_NB}) since $\bar{\mathcal{I}}_{x^\star} \subset \bar{\mathcal{I}}$, $|A(\bar{\mathcal{I}}_{x^\star},:) \tilde{x}^\star| \in {\{0,1\}}^{|\bar{\mathcal{I}}_{x^\star}|}$ and $k \in \bar{\mathcal{I}}_{x^\star}$. Also, $\left\|A(\bar{\mathcal{I}},:) \tilde{x}^\star \right\|_0 = \left\|A(\bar{\mathcal{I}}_{x^\star},:) \tilde{x}^\star \right\|_0 \le \left\|A(\bar{\mathcal{I}}_{x^\star},:) x^\star \right\|_0 = \left\|A(\bar{\mathcal{I}},:) x^\star\right\|_0$, as the inequality is true because $\tilde{x}^\star$ is an optimal solution to ${(\ref{opt:l1_LP})}^\prime$. Hence $\tilde{x}^\star$ is optimal to both (\ref{opt:l0}) and (\ref{opt:l0_NB}), with the same objective value.

  Conversely, suppose (\ref{opt:l0}) is infeasible, then (\ref{opt:l0_NB}) is also infeasible. This concludes that (\ref{opt:l0}) and (\ref{opt:l0_NB}) are equivalent.
\end{proof}

\begin{proof}[Proof of Theorem~\ref{thm:l0_l1_solution}]
  Let $(x_+^\star,x_-^\star,y_+^\star,y_-^\star)$ be an optimal basic feasible solution to (\ref{opt:l1_LP}). Then there exist $x^\star$ and $y^\star$ as defined in Lemma~\ref{thm:L1_int_sol}. In particular, $x^\star = (x_+^\star - x_-^\star)$. It can be verified that $(x^\star,y^\star)$ is an optimal solution to the following optimization problem:
  \begin{displaymath}
    \begin{array}{cl}
      \mathop{\rm minimize}\limits_{x,y} & \quad \sum\limits_{j = 1}^{|\bar{\mathcal{I}}|} y(i) \\
      \textrm{subject to} & \,\,\,\, A(\bar{\mathcal{I}},:) x \le y \\
      & \! -A(\bar{\mathcal{I}},:) x \le y \\
      & \,\,\,\, A(k,:) x = 1 \\
      & \,\,\,\, A(\mathcal{I},:) x = 0 \\
      & \,\,\, 0 \le y(j) \le 1 \quad \forall \; j = 1,2,\ldots,|\bar{\mathcal{I}}|,
    \end{array}
  \end{displaymath}
  where the inequalities above hold entry-wise. Because of the property that $y^\star(j) \in \{0,1\}$ for all $j$, $(x^\star,y^\star)$ is also an optimal solution to
  \begin{equation} \label{opt:l0_proof}
    \begin{array}{cl}
      \mathop{\rm minimize}\limits_{x,y} & \quad \sum\limits_{j = 1}^{|\bar{\mathcal{I}}|} y(i) \\
      \textrm{subject to} & \,\,\,\, A(\bar{\mathcal{I}},:) x \le y \\
      & \! -A(\bar{\mathcal{I}},:) x \le y \\
      & \,\,\,\, A(k,:) x = 1 \\
      & \,\,\,\, A(\mathcal{I},:) x = 0 \\
      & \,\,\,\, y(j) \in \{0,1\} \quad \forall \; j = 1,2,\ldots,|\bar{\mathcal{I}}|.
    \end{array}
  \end{equation}
  It can be verified that (\ref{opt:l0_proof}) is equivalent to (\ref{opt:l0_NB}). Then Lemma~\ref{thm:linf_bound} states that (\ref{opt:l0_proof}) is also equivalent to (\ref{opt:l0}). Consequently, $(x^\star,y^\star)$ being an optimal solution to (\ref{opt:l0_proof}) implies that (\ref{opt:l0}) is feasible with optimal objective value being $\sum\limits_{j = 1}^{|\bar{\mathcal{I}}|} y^\star(j)$. A feasible solution to (\ref{opt:l0}) is $x^\star$. Since $y^\star(j) = |A(\bar{\mathcal{I}}(j),:) x^\star| \in {\{0,1\}} \; \forall j$, it holds that $\left\|A(\bar{\mathcal{I}},:) x^\star\right\|_0 = \sum\limits_{j = 1}^{|\bar{\mathcal{I}}|} y^\star(j)$. Hence, $x^\star$ is an optimal solution to (\ref{opt:l0}).
\end{proof}

\section{Numerical Demonstration} \label{sec:numerical_demonstration}
As a demonstration, instances of the restricted security index problem in (\ref{opt:card_min_con_var}) are solved with $P$ being an identity matrix and $\mathcal{I}$ being empty. The incidence matrix $B$ describes the topology of one of the following benchmark systems: IEEE 14-bus, IEEE 57-bus, IEEE 118-bus, IEEE 300-bus and Polish 2383-bus and Polish 2736-bus \cite{MATPOWER}. For each benchmark, (\ref{opt:card_min_con_var}) is solved for all possible values of $k$ (e.g., 186 choices in the 118-bus case and 411 choices in the 300-bus case). Two solution approaches are tested. The first approach is the one proposed. It is denoted the $l_1$ approach, and includes the following steps:
\begin{enumerate}
  \item Set up the LP problem in (\ref{opt:l1_LP}) with $A$ being $B^T$.
  \item Solve (\ref{opt:l1_LP}) using a LP solver (e.g., CPLEX LP). Let $(x_+^\star,x_-^\star,y_+^\star,y_-^\star)$ be its optimal solution.
  \item Define $\Delta \theta^\star = x_+^\star - x_-^\star$. It is the optimal solution to (\ref{opt:card_min_con_var}), according to Theorem~\ref{thm:l0_l1_solution}.
\end{enumerate}
The second solution approach to (\ref{opt:card_min_con_var}) is standard, and it was applied also in \cite{SSJ_CDC2011_mincut,SSJ_ckt}. This second approach is referred to as $l_0$ approach, as (\ref{opt:card_min_con_var}) is formulated into the following problem:
\begin{equation} \label{opt:si_MILP}
  \begin{array}{cccl}
    \mathop{\rm minimize}\limits_{\Delta \theta, \; y} & \quad \sum\limits_j y(j) & & \\
    \textrm{subject to} & B^T \Delta \theta & \le & M y \\
    & -B^T \Delta \theta & \le & M y \\
    & {B(:,k)}^T \Delta \theta & = & 1 \\
    & y(j) & \in & \{0,1\} \quad \forall \; j,
  \end{array}
\end{equation}
where $M$ is a constant required to be at least ${\left\|B^T\right\|}_\infty = {\left\|B\right\|}_1$ (i.e., the maximum column sum of the absolute values of the entries of $B$) \cite{SSJ_CDC2011_mincut}. Because of the binary decision variables in $y$, (\ref{opt:si_MILP}) is a mixed integer linear programming (MILP) problem \cite{BT97}. It can be solved by a standard solver such as CPLEX. The correctness of the $l_0$ approach is a direct consequence that (\ref{opt:si_MILP}) is a reformulation of (\ref{opt:card_min_con_var}). As a result, both the $l_1$ and $l_0$ approaches are guaranteed to correctly solve (\ref{opt:card_min_con_var}) by theory. Fig.~\ref{fig:security_index_l1} shows the sorted security indices (i.e., optimal objective values of (\ref{opt:card_min_con_var})) for the four larger benchmark systems. The security indices are computed using the $l_1$ approach. As a comparison, the security indices are also computed using the $l_0$ approach, and they are shown in Fig.~\ref{fig:security_index_l0}. The two figures reaffirm the theory that the proposed $l_1$ approach computes the security indices exactly. Fig.~\ref{fig:security_index_l1} (or Fig.~\ref{fig:security_index_l0}) indicates that the measurement systems are relatively insecure, as there exist many measurements with very low security indices (i.e., equal to 1 or 2).
\begin{figure}[bht]
\centering
\centering{\includegraphics[scale=0.63]{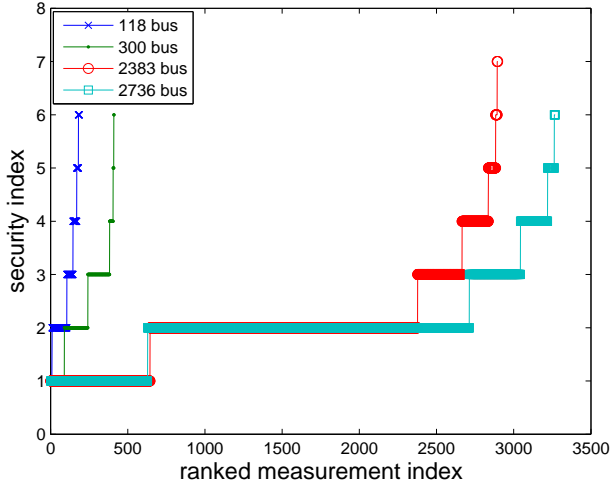}}
\caption{Security indices using the $l_1$ approach}\label{fig:security_index_l1}
\end{figure}
\begin{figure}[bht]
\centering
\centering{\includegraphics[scale=0.63]{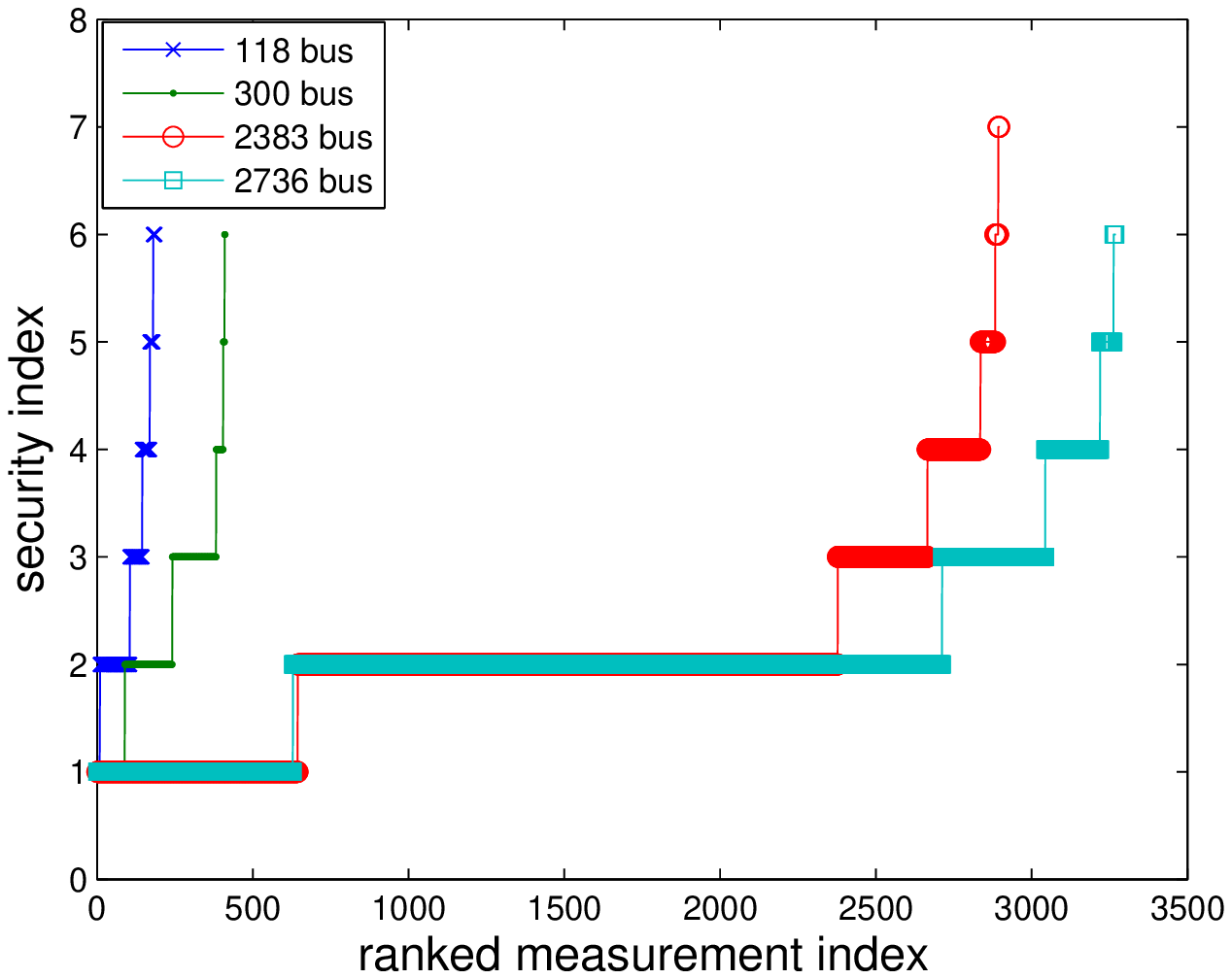}}
\caption{Security indices using the $l_0$ approach}\label{fig:security_index_l0}
\end{figure}

\begin{figure}[!tbh]
\centering
\centering{\includegraphics[scale=0.63]{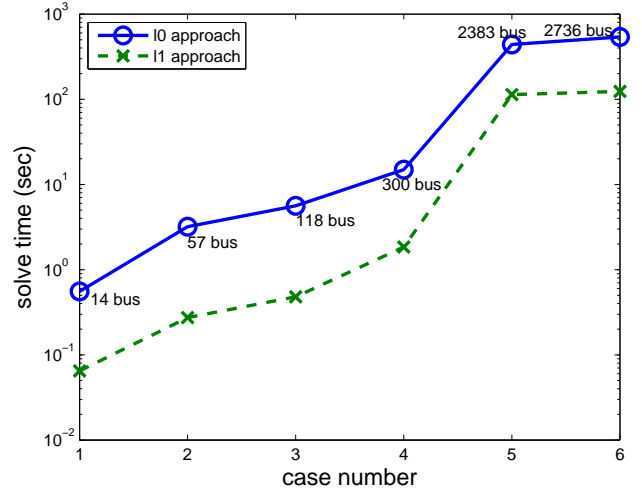}}
\caption{Solve-time for computing all security indices for different benchmark systems}\label{fig:solve_time}
\end{figure}
In terms of computation time performances, it is well-known that the $l_0$ approach is much more time-consuming than the $l_1$ approach since a MILP problem is much more difficult to solve than a LP problem of the same size \cite{BT97}. Fig.~\ref{fig:solve_time} shows the solve-time for computing all security indices for each benchmark system, using the $l_1$ and $l_0$ approaches. It verifies that the proposed $l_1$ approach is more effective. In the above illustration, all computations are performed on a dual-core Windows machine with 2.4GHz CPU and 2GB of RAM.

\section{Conclusion} \label{sec:conclusion}
The cardinality minimization problem is important but in general difficult to solve. An example is shown in this paper as the smart grid security index problem in (\ref{opt:card_min_con_protected}). The $l_1$ relaxation has demonstrated promise but to establish the cases where it provides exact solutions is non-trivial. Well-known results based on mutual coherence and RIP provide sufficient conditions under which a unique optimal solution solves both the cardinality minimization problem and its $l_1$ relaxation. However, this paper identifies a class of application motivated problems (as in (\ref{opt:l0})) which can be shown to be solvable by $l_1$ relaxation, even though results based on mutual coherence and RIP cannot make the assertion. In fact, the optimal solution to (\ref{opt:l0}) might not be unique. The key property that leads to the conclusion of this paper is total unimodularity of the constraint matrix. The total unimodularity of matrix $A$ in (\ref{opt:l0}) leads to two important consequences. (\ref{opt:l0}) is equivalent to its $\infty$-norm restricted version in (\ref{opt:l0_NB}). Furthermore, (\ref{opt:l0_NB}) can be solved exactly by solving the LP problem in (\ref{opt:l1_LP}), thus establishing the conclusion that $l_1$ relaxation exactly solves (\ref{opt:l0}).


\section*{Appendix}

\subsection{Proof of the equivalence between (\ref{opt:card_min_con_protected}) and (\ref{opt:card_min_con_var})}
Note that the constraint $H(\mathcal{I},:) \Delta \theta = 0$ implies that ${\left\| H \Delta \theta \right\|}_0 = {\left\| H(\bar{\mathcal{I}},:) \Delta \theta \right\|}_0$. Since $P$ consists of rows of an identity matrix and $D$ is diagonal and nonsingular, for all $\mathcal{J} \subset \{1,2,\ldots,m\}$, there exists a diagonal and nonsingular matrix $D_{\mathcal{J}}$ such that $P(\mathcal{J},:) D = D_{\mathcal{J}} P(\mathcal{J},:)$. In particular, let $D_{kk}$ be a positive scalar such that $P(k,:) D = D_{kk} P(k,:) = P(k,:) D_{kk}$. The above implies that for all $\Delta \theta$,
\begin{displaymath}
\begin{array}{l}
  P(k,:) B^T \Delta \theta = 1 \; \textrm{if and only if} \vspace{1mm} \\
  P(k,:) D_{kk} B^T ({D_{dd}}^{-1} \Delta \theta) = P(k,:) D B^T ({D_{dd}}^{-1} \Delta \theta) = 1.
\end{array}
\end{displaymath}
In addition, for all $\Delta \theta$
\begin{displaymath}
  \begin{array}{l}
    P(\mathcal{I},:) B^T \Delta \theta = 0 \; \textrm{if and only if} \vspace{1mm} \\
    {D_{kk}}^{-1} D_{\mathcal{I}} P(\mathcal{I},:) B^T \Delta \theta = P(\mathcal{I},:) D B^T ({D_{dd}}^{-1} \Delta \theta) = 0.
  \end{array}
\end{displaymath}
Finally, for all $\Delta \theta$
\begin{displaymath}
  \begin{array}{rcl}
    {\left\| P(\bar{\mathcal{I}},:) B^T \Delta \theta \right\|}_0 &
    = & {\left\| {D_{kk}}^{-1} D_{\bar{\mathcal{I}}} P(\bar{\mathcal{I}},:) B^T \Delta \theta \right\|}_0 \vspace{1mm} \\
    & = & {\left\| P(\bar{\mathcal{I}},:) D B^T ({D_{dd}}^{-1} \Delta \theta) \right\|}_0.
  \end{array}
\end{displaymath}
Applying the definition of $H$ in (\ref{def:H_matrix}) and a change of decision variable to ${D_{kk}}^{-1} \Delta \theta$ shows that (\ref{opt:card_min_con_protected}) and (\ref{opt:card_min_con_var}) are equivalent.


\subsection{Proof of Proposition~\ref{thm:con_obsv_eq}}
Part (a) is trivial.

For the necessary part of (b), condition I is necessary because if $H(k,:) = 0$ then ${\rm rank}(H(\bar{\mathcal{J}},:)) = {\rm rank}(H(\bar{\mathcal{J}}\cup\{k\},:))$ for all $\mathcal{J}$ (meaning that (\ref{opt:card_min_obsv}) is infeasible). Condition II is also necessary because if ${\rm rank}(H) < n$. then there does not exist any $\mathcal{J}$ such that ${\rm rank}(H(\bar{\mathcal{J}},:)) = n$.

For the sufficiency part of (b), assume that conditions I and II are satisfied. Then by part (a) problem~(\ref{opt:card_min_con}) is feasible. Hence it has an optimal solution denoted as $\theta^\star$. Define ${\mathcal{J}}_{\theta^\star} \subset \{1,2,\ldots,m\}$ such that $p \in {\mathcal{J}}_{\theta^\star}$ if and only if $H(p,:) \theta^\star \neq 0$. By definition of ${\mathcal{J}}_{\theta^\star}$, ${\rm rank}(H(\overline{{\mathcal{J}}_{\theta^\star}},:)) < n$. Also, $k \in {\mathcal{J}}_{\theta^\star}$ because $H(k,:) \theta^\star = 1$. If ${\rm rank}(H(\overline{{\mathcal{J}}_{\theta^\star}} \cup \{k\},:)) = n$, then ${\mathcal{J}}_{\theta^\star}$ is feasible to (\ref{opt:card_min_obsv}), thus showing that (\ref{opt:card_min_obsv}) is feasible. To show this, first consider the case when $\left\|H \theta^\star\right\|_0 = 1$. Then ${\mathcal{J}}_{\theta^\star} = \{k\}$ and ${\rm rank}(H(\overline{{\mathcal{J}}_{\theta^\star}} \cup \{k\},:)) = {\rm rank}(H) = n$ because of condition II (i.e., $H$ has full column rank). Next consider the case when $\left\|H \theta^\star\right\|_0 > 1$ (i.e., $\left|{\mathcal{J}}_{\theta^\star} \setminus \{k\}\right| > 0$). If ${\rm rank}(H(\overline{{\mathcal{J}}_{\theta^\star}} \cup \{k\},:)) < n$, then there exists $\tilde{\theta} \neq 0$ such that $H(\overline{{\mathcal{J}}_{\theta^\star}} \cup \{k\},:) \tilde{\theta} = 0$. In particular, $H(k,:) \tilde{\theta} = 0$. Also, condition II implies that $H({\mathcal{J}}_{\theta^\star} \setminus \{k\},:) \tilde{\theta} \neq 0$ (since otherwise $H \tilde{\theta} = 0$). Let $q \in {\mathcal{J}}_{\theta^\star} \setminus \{k\}$ such that $H(q,:) \tilde{\theta} \neq 0$. Note also that by definition of ${\mathcal{J}}_{\theta^\star}$, $H(q,:) \theta^\star \neq 0$. Construct $\theta^\prime \triangleq (H(q,:) \tilde{\theta}) \theta^\star - (H(q,:) \theta^\star) \tilde{\theta}$. Then $H(k,:) \theta^\prime = 1$, $H(p,:) \theta^\prime = 0$ whenever $H(p,:) \theta^\star = 0$, but $H(q,:) \theta^\prime = 0$ while $H(q,:) \theta^\star \neq 0$. This implies that $\theta^\prime$ is feasible to (\ref{opt:card_min_con}) with a strictly less objective value than that of $\theta^\star$, contradicting the optimality of $\theta^\star$. Therefore, the claim that ${\rm rank}(H(\overline{{\mathcal{J}}_{\theta^\star}} \cup \{k\},:)) = n$ is true. This implies that ${\mathcal{J}}_{\theta^\star}$ is feasible to (\ref{opt:card_min_obsv}), establishing the sufficiency of part (b).

For part (c), under conditions I and II both (\ref{opt:card_min_con}) and (\ref{opt:card_min_obsv}) are feasible. In addition, ${\mathcal{J}}_{\theta^\star}$ constructed in the proof of the sufficiency part of (b) satisfies $\left|{\mathcal{J}}_{\theta^\star}\right| = \left\|H \theta^\star\right\|_0$, for $\theta^\star$ being an optimal solution to (\ref{opt:card_min_con}). This means that the optimal objective function value of (\ref{opt:card_min_obsv}) is less than or equal to that of (\ref{opt:card_min_con}). For the converse, suppose that ${\mathcal{J}}^\star$ is optimal to (\ref{opt:card_min_obsv}), then the feasibility of ${\mathcal{J}}^\star$ implies that there exists $\theta_{{\mathcal{J}}^\star} \neq 0$ such that $H(\bar{{\mathcal{J}}^\star},:) \theta_{{\mathcal{J}}^\star} = 0$. This also implies that $\left\|H \theta_{{\mathcal{J}}^\star}\right\|_0 \le \left|{\mathcal{J}}^\star\right|$. If $H(k,:) \theta_{{\mathcal{J}}^\star} = 0$, then $H(\bar{{\mathcal{J}}^\star} \cup \{k\},:) \theta_{{\mathcal{J}}^\star} = 0$. This implies that ${\rm rank} (H(\bar{{\mathcal{J}}^\star} \cup \{k\},:)) < n$, contradicting the feasibility of ${\mathcal{J}}^\star$. Therefore, there exists a scalar $\alpha$ such that $H(k,:) (\alpha \theta_{{\mathcal{J}}^\star}) = 1$. Consequently, $\alpha \theta_{{\mathcal{J}}^\star}$ is feasible to (\ref{opt:card_min_con}) with an objective function value less than or equal to the optimal objective function value of (\ref{opt:card_min_obsv}).

\bibliographystyle{IEEEtran}
\bibliography{exact_l1,gmc}

\end{document}